\newcommand{\bp}{\mathcal{P}}
\newcommand{\Disc}{\mathbb{D}}
\newcommand{\conj}[1]{\overline{#1}}
\DeclareMathOperator{\Rp}{Re}
\DeclareMathOperator*{\esssup}{ess\,sup}
\newtheorem{theorem}{Theorem}
\newtheorem{definition}{Definition}
\newtheorem{lemma}[theorem]{Lemma}
\newtheorem{corollary}[theorem]{Corollary}
\begin{document}

\bibliographystyle{amsplain}

\title{Bounds on Integral Means of Bergman Projections and their Derivatives}
\author{Timothy Ferguson\thanks{Department of Mathematics, University of 
Alabama, Box 870350, Tuscaloosa, AL 35487, \hbox{tjferguson1@ua.edu}} \thanks{Thanks to Joseph Cima and an anonymous referee for their 
helpful comments.}\thanks{Partial Support for this work has been provided by the University of Alabama RGC-2015-22 grant}}

\maketitle

\begin{abstract}
We bound integral means of the Bergman projection of a function 
in terms of integral means of the original function. As an application 
of these results, we bound certain weighted Bergman space norms of 
derivatives of Bergman projections in terms of weighted 
$L^p$ norms of certain derivatives 
of the original function in the $\theta$ direction.  
These results easily imply the well known result that 
the Bergman projection is bounded from the Sobolev space 
$W^{k,p}$ into itself for $1 < p < \infty$.
We also apply our results to derive certain regularity results 
involving extremal problems in Bergman spaces. 
Lastly, we construct a 
function that approaches $0$ uniformly at the boundary of the unit disc
but whose Bergman projection is 
not in $H^2$. 
\end{abstract}

For $0 < p < \infty$, the Bergman space $A^p = A^p(\Disc)$ is the space 
of all analytic functions in the unit disc $\Disc$ such that 
\[
\|f\|_{A^p} = \left[ \int_{\Disc} |f(z)|^p d\sigma(z) \right]^{1/p} < \infty.
\]
Here, $\sigma$ is normalized Lebesgue area measure, so that 
$\sigma(\Disc) = 1$. The Bergman spaces are closed subspaces of 
$L^p(\Disc)$ (see \cite{D_Ap} or \cite{Zhu_Ap}). 

For a function in $L^p$ for $0 < p < \infty$, we 
define its $p^{\textrm{th}}$ integral mean at radius $r$ by 
\[
M_p(r,f) = \left[ \frac{1}{2\pi} \int_0^{2\pi} |f(re^{i\theta})|^p 
                       \, d\theta \right]^{1/p}.
\]
If $p = \infty$, we can define 
$M_p(r,f) = \esssup_{0 \le \theta < 2\pi} |f(re^{i\theta})|$. 
It is well known that if $f$ is analytic, then the integral means are 
nondecreasing functions of $r$ (see \cite{D_Hp}).  

For $0 < p \le \infty$, the Hardy space consists of all analytic 
functions in $\Disc$ 
for which $\|f\|_{H^p} = \sup_{0\le r < 1} M_p(r,f) < \infty$. 
It is easy to see that $H^p \subset A^p$ for $0 < p < \infty$. 
In fact, $H^{2p} \subset A^p$, and the $H^{2p}$ norm is always 
greater than or equal to the $A^p$ norm (see \cite{Dragan_Isoperimetric}), 
a fact which is related to the isoperimetric inequality.

If $f \in L^1( \Disc)$, we define its Bergman projection 
to be
\[
\bp f(z) = \frac{1}{\pi} 
    \int_{\Disc} \frac{f(w)}{(1-\conj{w}z)^{2}} dA(w)
\]
for $z \in \mathbb{D}$. 
The function $\bp(f)$ is an analytic function in the unit disc.  
When restricted 
to $L^2(\Disc)$, the Bergman projection is the orthogonal projection 
onto $A^2( \Disc)$.  It is well known that the Bergman projection 
is bounded from $L^p$ to $A^p$ 
for $1 < p < \infty$ (see \cite{D_Ap} or \cite{Zhu_Ap}). 

The main result of this article bounds integral means of derivatives 
of the Bergman projection of a function in terms of integral means of 
angular derivatives of the original function.  
These bounds are then used to bound certain weighted 
Bergman space norms of derivatives of 
the Bergman projection of a function in terms of certain 
weighted $L^p$ norms of derivatives of the original function in the 
$\theta$ direction.  
(See the articles 
\cite{MR2965249} and \cite{MR3130312} for similar results in the 
context of several complex variables.)
Our results easily imply the well known result that 
$\bp$ is bounded from the Sobolev space 
$W^{k,p}$ into itself for $1 < p < \infty$, where $k$ is a nonnegative 
integer.  Lastly, we give a result in the opposite direction from our 
main result: there exists a function 
$f$ such that $f(re^{i\theta}) \rightarrow 0$ uniformly as 
$r \rightarrow 1^{-}$, but for which the integral means
$M_2(r,\bp f)$ are not bounded in $r$. 

We remark that even though our methods are focused on estimating integral 
means of Bergman projections, they allow us to obtain the 
bound $2\pi/\sin(\pi/p)$ for 
the norm of the Bergman projection from $L^p$ to $A^p$.  
It is known that the norm is at least $1/(2 \sin(\pi/p))$ 
and at most $\pi/\sin(\pi/p)$, so that our bound differs from the 
norm by a factor that is between $1/(4\pi)$ and $1/2$ for each $p$ (see
\cite{Dostanic_BP_Norm}).  
In fact, our estimate holds for 
the operator with kernel $1/|1-\overline{\zeta} z|^2$, and it is know that 
the norm of this operator is exactly $\pi/\sin(\pi/p)$ 
(see \cite{Dostanic_Berezin_Norm}).

It may seem unusual to investigate integral means of Bergman projections, 
since integral means are 
related to Hardy spaces and the Bergman projection is related to 
Bergman spaces, so we give some motivation. 
In \cite{Ryabykh}, Ryabykh found a relation between Hardy spaces and 
extremal problems in Bergman spaces.  More specifically, he proved 
the following theorem:
Let $1 < p < \infty$ and let $1/p + 1/q = 1.$  Suppose that 
$\phi \in (A^p)^*$ and that $\phi(f) = \int_{\Disc} f \conj{k} \, d\sigma$ 
for some $k \in H^q,$ where $k \ne 0.$  
(The function $k$ is called the integral kernel of $\phi$.)  
Then the solution to the extremal problem 
of finding the function $F \in A^p$ of unit norm that maximizes 
$\Rp \phi(F)$ 
belongs to 
$H^p$.  (It is known that such an $F$ is unique.)  
Also, $F$ satisfies the bound
\begin{equation*}
\|F\|_{H^p} \le \Bigg\{ \bigg[ \max(p-1,1)
\bigg]\frac{C_p\|k\|_{H^q}}{\| k \|_{A^q}}\Bigg\}^{q/p},
\end{equation*}
where $C_p$ is a constant depending on $p$, which may be taken to be the 
norm of the Bergman projection on $A^p$ (see
\cite{tjf1}, Theorem 4.2).

Other relations between the regularity of $k$ and the regularity of 
$F$ are given in \cite{tjf:lipschitzbergman}.  
To state one, let $1 < p \leq \infty$, and 
say that $F \in \Lambda_{\beta}$ for $0 < \beta \leq 1$ if 
$\|F(e^{it} \cdot) - F(\cdot)\|_{H^p} \leq C|t|^\beta$ for some constant 
$C$, and 
say that $F \in \Lambda^*_{\beta,p}$ for $0 < \beta \leq 2$ if 
$\|F(e^{it} \cdot) + F(e^{-it} \cdot) - 2F(\cdot)\|_{H^p} \leq C|t|^\beta$ 
for some constant $C$.  It is known that 
$\Lambda^*_{\beta,p} = \Lambda_{\beta,p}$ for $0 < \beta < 1$, and that 
$f \in \Lambda_{\beta,p}$ if and only if $M_p(r,f') = O((1-r)^{\beta-1})$, and 
that $f \in \Lambda^*_{\beta,p}$ if and only if 
$M_p(r,f'') = O((1-r)^{\beta-2})$.  
Theorem 4.1 and Corollary 3.5 
of \cite{tjf:lipschitzbergman} imply that 
if 
$1/q < \beta < 2$ and $\beta/\nu > 1/p$ and 
$k \in \Lambda^*_{\beta-(1/q),q}$ then $F \in \Lambda^*_{(\beta/\nu)-(1/p),p}$, 
where $\nu = 2$ if $1 < p < 2$ and $\nu = p$ if $2 < p < \infty$.

The study of bounds for Bergman projections is related to results that 
are in some sense converse to the above results.  This is because 
$k$ is a constant multiple of $\bp(|F|^{p-2}F)$.  
In this paper, we prove results of such type. For example, 
if
$1 \le p - 1 \le p_1 < \infty$ and $0 < \alpha < 1$ and  
the extremal function $F$ is in $H^{p_1}$ and is in 
the space $\Lambda_{\alpha}^{p_1}$, then 
the integral kernel
$k \in H^{q_1}$ and 
has boundary values 
in $\Lambda_{\alpha}^{q_1}$, where $q_1 = p_1 / (p-1)$. 

In \cite{tjf2}, it is  proved 
that the converse to Ryabykh's theorem holds when $p$ is an 
even integer.  In fact, Theorem 4.3 in the above reference 
says that the following holds:
Suppose $p$ is an even integer and let $q$ be its conjugate 
exponent. 
Let $F \in A^p$ with $\|F\|_{A^p}=1$, 
and let $k$ be an integral 
kernel such that $F$ is the extremal function for 
the functional corresponding to $k$. 
(It is known that $k$ is unique up to a positive scalar multiple, 
see \cite{tjf1}). 
If $F \in H^{p_1}$ for some $p_1$ with $p-1 < p_1 < \infty$, then   
$k \in H^{q_1}$ for $q_1 = p_1/(p-1)$, and
\begin{equation*}
\frac{\|k\|_{H^{q_1}}}{\|k\|_{A^q}} \le
C \|F\|_{H^{p_1}}^{p-1},
\end{equation*} 
where $C$ is a constant depending only on $p$ and $p_1$.
(The statement in the reference is only for $p_1 \ge p$, but the 
proof works for all 
$p_1 > p-1$.) 
Since by Theorem 2.2 in \cite{tjf3} the function
$k$ is a constant multiple of 
$\bp(|F|^{p-2} F)$, 
this theorem implies the following result:
Suppose that $1 < q_1 < \infty$ and that $p$ is an even integer.
If $g$ has the form 
$g = |f|^{p-2} f$ for some analytic function $f$, and 
if $g$ has bounded
$M_{q_1}$ integral means, then 
$\bp(g) \in H^{q_1}$, where $\bp$ is the Bergman projection. 

In this paper, we provide a counterexample to a possible generalization 
of this result.  We find a function $g$ with bounded $M_2$ integral means, 
but such that $\bp(g) \not \in H^2$. In fact, we can even take 
$g$ so that $M_\infty(r,g)$ is bounded and $M_\infty(r,g) \rightarrow 0$ 
as $r \rightarrow 1^{-}$, and we can even assume that 
$g \in C^\infty(\Disc)$.  This shows that functions of the form 
$|f|^{p-2} f$, where $f$ is analytic and $p$ is an even integer, 
are in some sense better behaved than general functions under the 
Bergman projection.  It is unknown whether the same type of result 
holds for $p$ not an even integer.

\section{Hypergeometric Functions and Two Lemmas}
We first discuss hypergeometric functions, since we will use them 
in some of our proofs. 
The hypergeometric function ${}_2 F_1(a,b;c;z)$ is defined
for $|z| < 1$ and for $c$ not a non-positive integer by
\[
{}_2 F_1(a,b;c;z) = \sum_{n=0}^\infty \frac{(a)_n (b)_n}{(c)_n n!} z^n 
\]
(see \cite{NIST:DLMF}, eq.~15.2.1), where 
$(a)_n = a(a+1) \cdots (a+n-1) = \Gamma(a+n)/\Gamma(a)$.  
Note that $(a)_0 = 1$.  
(Note that if $c$ is a non-positive integer then not all terms in the 
sum are defined, which is why such values of $c$ are excluded.)
A hypergeometric function may be analytically 
continued to a single valued analytic function on $\mathbb{C}$ minus 
the part of the real axis from $1$ to $\infty$.  This analytic continuation 
is called the principal branch of the hypergeometric function. 

For $\Rp c > \Rp b > 0$, we have that  
\begin{equation}\label{eq_euler_hypergeo_int}
{}_2F_1(a,b;c;z) = \frac{\Gamma(c)}{\Gamma(b)\Gamma(c-b)} 
\int_0^1 x^{b-1} (1-x)^{c-b-1} (1-zx)^{-a} dx
\end{equation}
(see \cite{NIST:DLMF}, eq.~15.6.1).
Also, for $|\arg(1-z)|<\pi$ we have 
\begin{equation}\label{eq_euler_hypergeo_transf}
{}_2F_1(a,b;c;z) = (1-z)^{c-a-b} {}_2F_1(c-a, c-b;c;z)
\end{equation}
(see \cite{NIST:DLMF}, eq.~15.8.1).
Kummer's quadratic transformation states that for $|z|<1$ we have
\begin{equation}\label{eq_gauss_hypergeo_transf}
{}_2F_1(a,b;2b;4z/(1+z)^2) = (1+z)^{2a} {}_2F_1(a,a+\frac{1}{2}-b;
b + \frac{1}{2}; z^2)
\end{equation}
(see \cite{NIST:DLMF}, eq.~15.8.21). 
If $\Rp c > \Rp (a+b)$ then the power series defining 
$_{2}F_{1}$ converges absolutely on the circle $|z|=1$ 
(see \cite{NIST:DLMF}, 15.2.(i)), and thus uniformly on $|z| \le 1$. 
In addition, for $\Rp c > \Rp (a+b)$ the value of the hypergeometric 
function at $1$ (that is, the sum of the hypergeometric series for 
$z=1$) is given by
\begin{equation}\label{eq_hypergeo_value_1}
{}_2F_1(a,b;c;1) =
\frac{\Gamma(c)\Gamma(c-a-b)}{\Gamma(c-a)\Gamma(c-b)}
\end{equation}
(see \cite{NIST:DLMF}, eq.~15.4.20).

The following lemma is well known, although we do not know if anyone has 
found the sharp constant before (see e.g.~\cite{Zhu_Ap}, Theorem 1.7). 
\begin{lemma} \label{lemma:1minusrep}
Let $p>1$ and $0<r<1$. Then 
\[
\begin{split}
\frac{1}{2\pi} \int_0^{2\pi} \frac{1}{|1-re^{i\theta}|^p} d\theta &= 
(1-r^2)^{1-p} {}_2F_1\left(1-\frac{p}{2}, 1-\frac{p}{2};1;r^2\right)
\\ &\le 
\frac{\Gamma(p-1)}{\Gamma(p/2)^2} (1-r^2)^{1-p}.
\end{split}
\]
Furthermore, the bound is sharp, in the sense that the integral in question, 
divided by $(1-r^2)^{1-p}$, is always less than or equal to
$\tfrac{\Gamma(p-1)}{\Gamma(p/2)^2}$, but the quotient approaches 
$\tfrac{\Gamma(p-1)}{\Gamma(p/2)^2}$ as $r \rightarrow 1$. 
\end{lemma}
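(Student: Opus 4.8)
The plan is to compute the integral exactly as a hypergeometric series and then bound that series by its value at the boundary point $1$. First I would write $|1 - re^{i\theta}|^{-p} = (1 - re^{i\theta})^{-p/2}(1 - re^{-i\theta})^{-p/2}$ and expand each factor using the binomial series $(1 - z)^{-p/2} = \sum_{n=0}^\infty \frac{(p/2)_n}{n!} z^n$, which for fixed $r < 1$ converges absolutely and uniformly in $\theta$. Multiplying the two series and integrating term by term (justified by uniform convergence), all cross terms $e^{i(n-m)\theta}$ with $n \neq m$ integrate to zero, leaving
\[
\frac{1}{2\pi} \int_0^{2\pi} \frac{1}{|1 - re^{i\theta}|^p} \, d\theta = \sum_{n=0}^\infty \left( \frac{(p/2)_n}{n!} \right)^2 r^{2n} = {}_2F_1\!\left( \tfrac{p}{2}, \tfrac{p}{2}; 1; r^2 \right),
\]
where the last equality uses $(1)_n = n!$. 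Applying the Euler transformation \eqref{eq_euler_hypergeo_transf} with $a = b = p/2$, $c = 1$, $z = r^2$ (so that $c - a - b = 1 - p$) converts this into the claimed form $(1 - r^2)^{1-p} \, {}_2F_1(1 - \tfrac{p}{2}, 1 - \tfrac{p}{2}; 1; r^2)$, proving the stated equality.

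For the inequality, the key observation is that the hypergeometric series on the right has nonnegative coefficients: its $n$-th coefficient is $\big( (1 - p/2)_n \big)^2 / (n!)^2 \ge 0$, since it is a perfect square (even though $(1 - p/2)_n$ itself may be negative). Because $p > 1$ we have $\Rp c = 1 > \Rp(a + b) = 2 - p$, so by the convergence criterion quoted before \eqref{eq_hypergeo_value_1} the series converges at $z = 1$. Hence for $0 \le r < 1$ each term satisfies $\big( (1-p/2)_n \big)^2 (r^2)^n / (n!)^2 \le \big( (1-p/2)_n \big)^2 / (n!)^2$, and summing gives
\[
{}_2F_1\!\left( 1 - \tfrac{p}{2}, 1 - \tfrac{p}{2}; 1; r^2 \right) \le {}_2F_1\!\left( 1 - \tfrac{p}{2}, 1 - \tfrac{p}{2}; 1; 1 \right).
\]
The value on the right is evaluated by the Gauss formula \eqref{eq_hypergeo_value_1} with $c - a - b = p - 1$ and $c - a = c - b = p/2$, which gives exactly $\Gamma(p-1)/\Gamma(p/2)^2$. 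Multiplying through by $(1 - r^2)^{1-p}$ yields the desired bound.

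Sharpness is immediate from the same picture: the quotient of the integral by $(1 - r^2)^{1-p}$ is precisely ${}_2F_1(1 - \tfrac{p}{2}, 1 - \tfrac{p}{2}; 1; r^2)$, which is nondecreasing in $r$ (nonnegative coefficients) and strictly below its boundary value for $r < 1$, while as $r \to 1^-$ it increases to ${}_2F_1(1 - \tfrac{p}{2}, 1 - \tfrac{p}{2}; 1; 1) = \Gamma(p-1)/\Gamma(p/2)^2$ by monotone convergence (or Abel's theorem).

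I do not anticipate a serious obstacle here; the argument is essentially a recognition problem. The one point that genuinely needs care is the justification of the term-by-term integration and, more delicately, the passage to the boundary value $z = 1$: one must invoke the condition $p > 1$ precisely to guarantee both convergence of the series at $1$ and the validity of the Gauss evaluation \eqref{eq_hypergeo_value_1}. I would make sure to state the convergence criterion explicitly so that the limit $r \to 1^-$ and the closed-form value are both rigorously supported rather than merely formal.
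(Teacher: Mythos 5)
Your proof is correct, but it reaches the key identity by a genuinely different route than the paper. The paper establishes $\frac{1}{2\pi}\int_0^{2\pi}|1-re^{i\theta}|^{-p}\,d\theta = {}_2F_1(p/2,p/2;1;r^2)$ by substituting $x=(\cos\theta+1)/2$ to recognize Euler's integral representation \eqref{eq_euler_hypergeo_int}, which yields $(1+r)^{-p}\,{}_2F_1(p/2,1/2;1;4r/(1+r)^2)$, and then invokes Kummer's quadratic transformation \eqref{eq_gauss_hypergeo_transf}; you instead factor $|1-re^{i\theta}|^{-p}=(1-re^{i\theta})^{-p/2}(1-re^{-i\theta})^{-p/2}$, expand both factors in binomial series, and use orthogonality of $e^{ik\theta}$ to kill the cross terms. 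Your route is more elementary and self-contained (absolute convergence for $r<1$ justifies the term-by-term integration, so no quadratic transformation is needed), while the paper's route is shorter on the page given that \eqref{eq_euler_hypergeo_int} and \eqref{eq_gauss_hypergeo_transf} are already quoted. From the identity ${}_2F_1(p/2,p/2;1;r^2)$ onward the two arguments coincide: both apply the Euler transformation \eqref{eq_euler_hypergeo_transf} to get the factor $(1-r^2)^{1-p}$, both observe that the coefficients $\bigl((1-p/2)_n/n!\bigr)^2$ are nonnegative so the remaining hypergeometric factor is nondecreasing in $r$, and both evaluate at $z=1$ via \eqref{eq_hypergeo_value_1} using $p>1$. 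One small overstatement: your claim that the quotient is \emph{strictly} below its boundary value for $r<1$ fails at $p=2$, where $(1-p/2)_n=(0)_n$ vanishes for $n\geq 1$ and the series is identically $1$; but the lemma only asserts a non-strict inequality, so your monotonicity argument already delivers everything that is claimed.
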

In the case $p=2$, the equality says that 
\[
\frac{1}{2\pi} \int_0^{2\pi} \frac{1}{|1-re^{i\theta}|^2} d\theta = 
(1-r^2)^{-1}.
\]
\begin{proof}
The integral in question is equal to
\[
\frac{1}{\pi} \int_0^\pi (1-2r\cos\theta + r^2)^{-p/2}\, d\theta.
\]
Making the substitution $x = (\cos \theta + 1)/2$, we see that the
integral is equal to 
\[
\begin{split}
&\ \frac{1}{\pi} (1+r)^{-p} \int_0^1 \left(1 -
  \frac{4r}{(1+r)^2}x\right)^{-p/2} x^{-1/2} (1-x)^{-1/2} dx 
\\ &= 
(1+r)^{-p} {}_2F_1(p/2, 1/2;1; 4r/(1+r)^2)
\end{split}
\]
by equation \eqref{eq_euler_hypergeo_int}.  Now using equation 
\eqref{eq_gauss_hypergeo_transf}, we see this is equal to 
\[
{}_2F_1(p/2, p/2;1;r^2).
\]
Equation \eqref{eq_euler_hypergeo_transf} shows this is equal to 
\[
(1-r^2)^{1-p} {}_2F_1(1-(p/2), 1-(p/2); 1; r^2).
\]
The bound now follows from equation \eqref{eq_hypergeo_value_1}, 
since the series representation shows that 
$_2F_1(1-(p/2), 1-(p/2); 1; r^2)$ increases from $r=0$ to $r=1$. 
The remark about $p=2$ is true because 
$_2F_1(1, 1;1;x) =(1-x)^{-1}.$
\end{proof}

The following lemma is likely well known, at least without 
the sharp constant,
although we do not know of a specific place where it appears in the 
literature.
\begin{lemma}\label{yam1}
Suppose that $s < 1$ and $m+s > 1$ and that $k > -1$. 
Let $0 \le x < 1$. Then 
\[
\int_0^1 \frac{(1-y)^{-s}}{(1-xy)^m} y^k \, dy \le C_1(s,m,k) (1-x)^{1-s-m}
\]
where $C_1(s,m,k) < \infty$ is defined by 
\[
C_1(s,m,k) = \frac{\Gamma(k+1)\Gamma(1-s)}{\Gamma(2+k-s)} 
\max_{0 \le x \le 1} {}_2F_1( 2+k-s-m, 1-s; 2+k-s; x).
\]
Furthermore, the bound is sharp
in the sense that the integral in question, 
divided by $(1-x)^{1-s-m}$, is always less than or equal to
$C_1(s,m,k)$, and furthermore $C_1(s,m,k)$ is the smallest 
constant with this property.
\end{lemma}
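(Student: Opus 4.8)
The plan is to recognize the integral as an instance of Euler's integral representation \eqref{eq_euler_hypergeo_int} and then apply the Euler transformation \eqref{eq_euler_hypergeo_transf} to extract the factor $(1-x)^{1-s-m}$. Matching the integrand $y^k(1-y)^{-s}(1-xy)^{-m}$ against $t^{b-1}(1-t)^{c-b-1}(1-zt)^{-a}$, I would set $b = k+1$, $c = 2+k-s$, $a = m$, and $z = x$. The hypotheses $k > -1$ and $s < 1$ guarantee exactly the conditions $\Rp c > \Rp b > 0$ needed to apply \eqref{eq_euler_hypergeo_int}, giving
\[
\int_0^1 \frac{(1-y)^{-s}}{(1-xy)^m} y^k\, dy = \frac{\Gamma(k+1)\Gamma(1-s)}{\Gamma(2+k-s)}\, {}_2F_1(m, k+1; 2+k-s; x).
\]

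Next I would apply \eqref{eq_euler_hypergeo_transf} with these parameters. Since $c - a - b = 1 - s - m$, $c - a = 2 + k - s - m$, and $c - b = 1 - s$, this yields
\[
{}_2F_1(m, k+1; 2+k-s; x) = (1-x)^{1-s-m}\, {}_2F_1(2+k-s-m, 1-s; 2+k-s; x),
\]
valid for $x \in [0,1)$ because then $1-x$ is a positive real and $|\arg(1-x)| = 0 < \pi$. Substituting back shows that the integral equals $\tfrac{\Gamma(k+1)\Gamma(1-s)}{\Gamma(2+k-s)}(1-x)^{1-s-m}$ times the transformed hypergeometric function, so the stated inequality follows once I bound that hypergeometric function by its maximum over $[0,1]$.

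To see that $C_1(s,m,k) < \infty$, I would check that the transformed parameters satisfy the convergence criterion $\Rp c > \Rp(a+b)$: here $c - (a+b) = (2+k-s) - (2+k-s-m) - (1-s) = m + s - 1 > 0$ by hypothesis. Hence, by the convergence fact quoted just before the lemma, the function ${}_2F_1(2+k-s-m, 1-s; 2+k-s; \cdot)$ converges uniformly on $|z| \le 1$ and is therefore continuous on the compact interval $[0,1]$, so its maximum there is finite and attained.

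Finally, for sharpness, I would observe that the displayed computation is an exact identity: the ratio of the integral to $(1-x)^{1-s-m}$ equals precisely $\tfrac{\Gamma(k+1)\Gamma(1-s)}{\Gamma(2+k-s)}\,{}_2F_1(2+k-s-m, 1-s; 2+k-s; x)$ for every $x \in [0,1)$. By the continuity just established, the supremum of this ratio over $x \in [0,1)$ equals its maximum over $[0,1]$, namely $C_1(s,m,k)$, so no smaller constant works. The proof presents no real obstacle beyond careful bookkeeping of parameters; the one point deserving attention is that the maximum defining $C_1$ is taken over the \emph{closed} interval $[0,1]$. Since $2+k-s-m$ may be negative (the hypothesis $m+s>1$ forces $m > 1-s$), the transformed function need not be monotone and its maximum need not occur at an endpoint, so one must invoke continuity at $x=1$ to equate the supremum over $[0,1)$ with this maximum.
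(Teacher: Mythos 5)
Your proof is correct and follows essentially the same route as the paper: Euler's integral representation \eqref{eq_euler_hypergeo_int} with $a=m$, $b=k+1$, $c=2+k-s$, then the transformation \eqref{eq_euler_hypergeo_transf}, then finiteness of $C_1$ via uniform convergence on $|z|\le 1$ since $c-a-b = m+s-1>0$. Your only addition is to spell out the sharpness claim (continuity at $x=1$ makes the supremum over $[0,1)$ equal the maximum over $[0,1]$, since the identity is exact), a point the paper's proof leaves implicit; this is a correct and welcome refinement rather than a different approach.
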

\begin{proof}
By \eqref{eq_euler_hypergeo_int}, we have 
\[
\int_0^1 \frac{(1-y)^{-s}}{(1-xy)^m} y^k \, dy = 
\frac{\Gamma(k+1)\Gamma(1-s)}{\Gamma(2+k-s)} {}_2F_1(m,k+1;2+k-s;x)
\]
Now \eqref{eq_euler_hypergeo_transf} gives that this is equal to 
\[
\frac{\Gamma(k+1)\Gamma(1-s)}{\Gamma(2+k-s)}
(1-x)^{1-s-m} {}_2F_1(2+k-s-m,1-s;2+k-s;x).
\]
Now since $s+m-1 > 0$, the function 
${}_2F_1(2+k-s-m,1-s;2+k-s;z)$
converges uniformly on 
$|z| \le 1$, and so
${}_2F_1(2+k-s-m,1-s;2+k-s;x)$ is bounded for $|x| \le 1$.
Thus, 
the above displayed expression is less than or equal to 
\( C_1(s,m,k) (1-x)^{1-s-m}.\)
\end{proof}
Note that if $2+k > s+m$ and $2+k > s$, then the hypergeometric function 
${}_2F_1(2+k-s-m, 1-s; 2+k-s; x)$ is increasing on $[0,1)$, and 
so the maximum in the bound occurs at $x=1$.  By 
\eqref{eq_hypergeo_value_1},
$C_1(s,m,k)$ becomes
\[
\frac{\Gamma(2+k-s) \Gamma(s+m-1) \Gamma(k+1) \Gamma(1-s)}
{\Gamma(1+k) \Gamma(m) \Gamma(2+k-s)}
=
\frac{\Gamma(s+m-1) \Gamma(1-s)}
{\Gamma(m)}.
\]

\section{Bounds on Integral Means of Bergman Projections}

As discussed above, we make the following definition. 
\begin{definition} Let $f \in L^p(\Disc)$ for $0 < p \le \infty$.  Define
\[
M_p(r,f) = \left\{ \frac{1}{2\pi} \int_0^{2\pi} |f(re^{i\theta})|^p d\theta
\right\}^{1/p}
\]
for $0 < p < \infty$ and 
\[ M_p(r,f) = \esssup_{0 \le \theta < 2\pi} |f(re^{i\theta})| \]
for $p = \infty$. 
\end{definition}
Note that for $f \in L^p(\Disc)$, 
the integral means $M_p(r,f)$ are defined for almost 
every $r$ such that $0 < r < 1$, and in fact the 
function $M_p(\cdot, f)$ is in 
$L^p(r \, dr)$ on $[0,1)$.   For 
$0 < p < \infty$ this follows immediately from Fubini's theorem
(see \cite{Rudin_Big}, Theorem 7.12),  
and for $p = \infty$ it may be proved either directly with the aid 
of Fubini's theorem, or by noting that 
$M_\infty(r,f) = \lim_{p \rightarrow \infty} M_p(r,f)$. 

We say that a function 
$f \in L^p(\partial \mathbb{D})$ is 
in the Sobolev space 
$W^{k,p}(\partial \mathbb{D})$ if 
for every $1 \leq n \leq k$ 
and every function $g \in C^\infty(\partial \mathbb{D})$, 
there is a function $h_n$ such that  
$\int_0^{2\pi} f(e^{i\theta}) \tfrac{d^n}{d\theta^n} g(e^{i\theta}) \, d\theta 
= (-1)^{n}
 \int_0^{2\pi} h_n(e^{i\theta}) g(e^{i\theta}) \, d\theta,$
and furthermore $f$ and each $h_n$ are in $L^p(\partial \mathbb{D}).$ 
Then $h_n$ is unique (see \cite[Chapter 5]{Evans}), so 
we denote $h_n$ by $\tfrac{d^n}{d\theta^n} f$.  
It is well known that if $f \in W^{k,p}$ 
for $1 < p < \infty$ then $f \in C^{k-1}$(see for example 
\cite[Section 5.6]{Evans}).  In fact, since the dimension here is $1$, 
this assertion is not difficult to show directly, and also follows 
for $p=1$.

We next define an auxiliary operator which we will use to help 
bound the Bergman projection.
\begin{definition}
Let $f \in L^1(\partial \Disc).$ Define 
\[
\bp_r^{(n)}(f)(\theta) = \frac{(n+1)!}{2\pi} \int_0^{2\pi} 
        \frac{f(e^{i\phi})r^n e^{-in\phi} }{(1-re^{i(\theta-\phi)})^{2+n}} d\phi.
\]
\end{definition}

We now have the following theorem, which gives a bound on the 
$L^p$ norm of $\bp_r^{(n)}(f)$.  
\begin{theorem}\label{thm:Prn_bound}
Let $1 \le p \le \infty$, and let $k$ be an integer such that $0 \le k \le n$.  
Assume that $f$ is in the Sobolev space $W^{k,p}(\partial \mathbb{D})$. 
Then 
\[
\|\bp_r^{(n)}(f)\|_p \le 
\frac{\Gamma(n+1-k)\Gamma(n+2-k)}{\Gamma((n+2-k)/2)^2}
r^{n-k} (1-r^2)^{k-n-1} \left\| \frac{d^k}{d\theta^k} 
\left[ f(e^{i\theta}) e^{-in\theta}\right]\right\|_p,
\]
where $\|\cdot\|_p$ denotes the $L^p(\partial \Disc)$ norm. 
\end{theorem}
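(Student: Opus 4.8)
The plan is to reduce $\bp_r^{(n)}(f)$ to a convolution on the circle and then apply Young's inequality together with Lemma~\ref{lemma:1minusrep}. First I would set $g(\phi) = f(e^{i\phi})e^{-in\phi}$; writing $f(e^{i\phi}) = g(\phi)e^{in\phi}$, the factor $e^{in\phi}$ cancels the $e^{-in\phi}$ in the kernel, so that
\[
\bp_r^{(n)}(f)(\theta) = \frac{(n+1)!}{2\pi}\, r^n \int_0^{2\pi} \frac{g(\phi)}{(1-re^{i(\theta-\phi)})^{n+2}}\, d\phi,
\]
which is $(n+1)!\,r^n$ times the normalized convolution of $g$ with the kernel $\kappa_{n+2}(\psi) = (1-re^{i\psi})^{-(n+2)}$. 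For the base case $k=0$, Young's inequality for convolutions on the circle gives $\|\bp_r^{(n)}(f)\|_p \le (n+1)!\,r^n\,\|\kappa_{n+2}\|_1\,\|g\|_p$, and Lemma~\ref{lemma:1minusrep}, applied with its exponent equal to $n+2$, bounds $\|\kappa_{n+2}\|_1$ by $\tfrac{\Gamma(n+1)}{\Gamma((n+2)/2)^2}(1-r^2)^{-n-1}$. Since $(n+1)!\,\Gamma(n+1) = \Gamma(n+2)\Gamma(n+1)$, this is exactly the claimed constant with $k=0$.

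For general $1 \le k \le n$ the idea is to integrate by parts $k$ times in $\phi$ so as to lower the exponent of the kernel from $n+2$ to $n+2-k$ while transferring $k$ derivatives onto $g$. The mechanism is the elementary identity
\[
\frac{d}{d\phi}\,(1-re^{i(\theta-\phi)})^{-m} = im\big[(1-re^{i(\theta-\phi)})^{-m} - (1-re^{i(\theta-\phi)})^{-(m+1)}\big],
\]
which rewrites $\kappa_{m+1}$ in terms of $\kappa_m$ and $\tfrac{d}{d\phi}\kappa_m$; applying it for $m = n+1, n, \dots, n+2-k$ expresses $\kappa_{n+2}$ as an order-$k$ differential operator in $\phi$ acting on $\kappa_{n+2-k}$. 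Each integration by parts has vanishing boundary terms by $2\pi$-periodicity and moves these derivatives onto $g$. The leading term carries the factor $\prod_{m=n+2-k}^{n+1} m^{-1} = (n+1-k)!/(n+1)!$, which collapses the prefactor $(n+1)!$ into $(n+1-k)! = \Gamma(n+2-k)$ multiplying $\tfrac{d^k}{d\phi^k}g$. Applying Young's inequality and Lemma~\ref{lemma:1minusrep} once more, now with exponent $n+2-k$ (legitimate since $n+2-k \ge 2 > 1$), bounds $\|\kappa_{n+2-k}\|_1$ by $\tfrac{\Gamma(n+1-k)}{\Gamma((n+2-k)/2)^2}(1-r^2)^{k-n-1}$; assembling the constants then yields the announced bound (with the harmless estimate $r^{n}\le r^{n-k}$ absorbing the power of $r$).

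A cleaner organization of the same computation is to prove the single-step recursion $\bp_r^{(n)}(f) = (n+1)\,r\,\bp_r^{(n-1)}(\tilde f)$, where $\tilde f$ corresponds to $g - \tfrac{i}{n+1}g'$, and then induct on $k$. This isolates exactly one derivative per step, keeps every constant transparent, and reduces the theorem for $(n,k)$ to the already-established case $(n-1,k-1)$. I expect the main obstacle to lie precisely in this bookkeeping: one must confirm that iterating the kernel identity delivers its top-order part as $\Gamma(n+2-k)\,\tfrac{d^k}{d\phi^k}g$ with the stated constant, and one must control the lower-order contributions $\tfrac{d^j}{d\phi^j}g$ with $j<k$ that the differential operator also produces. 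Verifying that these pieces recombine to reproduce the factor $\Gamma(n+2-k)\Gamma(n+1-k)/\Gamma((n+2-k)/2)^2$ without worsening the power of $(1-r^2)$ is the crux, and it is where the precise structure of $g = f(e^{i\theta})e^{-in\theta}$ (rather than $f$ alone) is essential.
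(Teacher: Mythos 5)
Your reduction of $\bp_r^{(n)}(f)$ to $(n+1)!\,r^n$ times a convolution of $g(\phi)=f(e^{i\phi})e^{-in\phi}$ with $\kappa_{n+2}(\psi)=(1-re^{i\psi})^{-(n+2)}$ is correct, and your $k=0$ case is complete and in substance the same as the paper's: the paper does not invoke Young's inequality by name but proves it by hand (H\"older plus Tonelli, with Lemma~\ref{lemma:1minusrep} supplying the sharp constant), arriving at the same factor $\Gamma(n+1)\Gamma(n+2)/\Gamma((n+2)/2)^2$.

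For $k\ge 1$, however, the obstacle you flagged as ``the crux'' is a genuine gap, and it cannot be closed. Your kernel identity, used once with no boundary terms, gives exactly
\[
\int_0^{2\pi}\frac{g(\phi)}{(1-re^{i(\theta-\phi)})^{n+2}}\,d\phi
=\int_0^{2\pi}\frac{g(\phi)-\tfrac{i}{n+1}g'(\phi)}{(1-re^{i(\theta-\phi)})^{n+1}}\,d\phi,
\]
so each step keeps the zeroth-order term $g$ alongside $g'$; iterating produces the operator $\prod_{j=1}^{k}\bigl(1-\tfrac{i}{n+2-j}\tfrac{d}{d\phi}\bigr)$ applied to $g$, and every derivative $g^{(j)}$ with $0\le j\le k$ survives with a nonzero coefficient. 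These lower-order pieces cannot ``recombine'' into a bound by $\|g^{(k)}\|_p$ alone: take $f(e^{i\phi})=e^{in\phi}$, so that $g\equiv 1$ and the right-hand side of the theorem vanishes for every $k\ge 1$, while $\bp_r^{(n)}(f)(\theta)=(n+1)!\,r^n\ne 0$, since only the constant term of the kernel survives the $\phi$-integration. (The top-order bookkeeping itself you did correctly --- the coefficient $(n+1-k)!=\Gamma(n+2-k)$ is right --- but the theorem's right-hand side depends only on $g^{(k)}$, and the example shows no such bound can hold.) You should be aware that the paper's own proof asserts precisely the clean identity you were hoping for: after $k$ integrations by parts it writes $\bp_r^{(n)}(f)$ as a constant times $\int_0^{2\pi} g^{(k)}(\phi)(1-re^{i(\theta-\phi)})^{-(n+2-k)}\,d\phi$ with no lower-order residue, and the same example contradicts that displayed identity (and the stated inequality for $k\ge 1$). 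So in probing this step you have located a real defect in the paper's argument, not merely in your own. What your single-step recursion $\bp_r^{(n)}(f)=(n+1)\,r\,\bp_r^{(n-1)}(\tilde f)$, which is the correct exact statement, honestly yields upon iteration and the $k=0$ estimate is a bound with $\bigl\|\prod_{j=1}^{k}\bigl(1-\tfrac{i}{n+2-j}\partial_\phi\bigr)g\bigr\|_p$, equivalently a sum $\sum_{j=0}^{k}c_j\|g^{(j)}\|_p$, in place of $\|g^{(k)}\|_p$; with that right-hand side your argument goes through, but the theorem as stated is not provable.
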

\begin{proof}
First assume that $p < \infty$. 
Performing integration by parts $k$ times gives
\[
\begin{split}
\bp_r^{(n)}(f)(\theta) &= \frac{(n+1)!}{2\pi} \int_0^{2\pi} 
        \frac{f(e^{i\phi}) r^n e^{-in\phi}}{(1-re^{i(\theta-\phi)})^{2+n}} d\phi
\\ &=
r^{n-k} e^{-ik\theta} \frac{(n-k + 1)!}{2\pi i^k} \int_0^{2\pi} 
        \frac{\frac{d^k}{d\theta^k}[f(e^{i\phi}) 
e^{-in\phi}]}{(1-re^{i(\theta-\phi)})^{2+n-k}} d\phi.
\end{split}
\]
This is legitimate since $f$ is in $W^{k,p}$, and thus all its derivatives 
except possibly the $k^{th}$ are continuous.  
We have also used the fact that both 
$f(e^{i\phi})$ and $(1-re^{i(\theta - \phi)})^{-1}$ are periodic 
in $\phi$ with period $2\pi$. 

The above displayed equation, Lemma \ref{lemma:1minusrep} and 
H\"{o}lder's inequality immediately gives the case $p = \infty$. 
If $p < \infty$, 
let $m=n+2-k$, and let 
$g(e^{i\theta}) = {\frac{d^k}{d\phi^k}[f(e^{i\phi}) e^{-in\phi}]}$. 
Note that
\[
(r^{n-k} (n-k+1)!)^{-p} \|\bp_r^{(n)}(f)\|_p^p  \le 
\int_0^{2\pi} \left| \int_0^{2\pi}
  \frac{|g(e^{i\phi})|}{|1-re^{i(\theta - \phi)}|^m} \, 
    \frac{d\phi}{2\pi}
  \right|^p \frac{d\theta }{2\pi}.
\]
But the right hand side of the above inequality equals 
\[
\int_0^{2\pi} \left| \int_0^{2\pi}
  \frac{|g(e^{i\phi})|}{|1-re^{i(\theta - \phi)}|^{m/p}} 
  \frac{1}{|1-re^{i(\theta - \phi)}|^{m/q}}
\, \frac{d\phi}{2\pi}
  \right|^p \frac{d\theta }{2\pi},
\]
where $q$ is the conjugate exponent to $p$. 
By H\"{o}lder's inequality, this is less than or equal to 
\[
\int_0^{2\pi} \int_0^{2\pi}
  \frac{|g(e^{i\phi})|^p}{|1-re^{i(\theta - \phi)}|^m} \, \frac{d\phi}{2\pi}
 \left( \int_0^{2\pi}\frac{1}{|1-re^{i(\theta - \phi)}|^{m}}
\, \frac{d\phi}{2\pi}
  \right)^{p/q} \frac{d\theta}{2\pi}.
\]
And by Lemma \ref{lemma:1minusrep}, this is at most
\[
\left(\frac{\Gamma(m-1)}{\Gamma(m/2)^2}\right)^{p-1}
\int_0^{2\pi} \int_0^{2\pi}
  \frac{|g(e^{i\phi})|^p}{|1-re^{i(\theta - \phi)}|^m} \, 
\frac{d\phi}{2\pi}
(1-r^2)^{(1-m)(p-1)}
  \frac{d\theta}{2\pi},
\]
where we have used the fact that $p/q = p-1$.
Now Tonelli's theorem shows that this equals
\[
\begin{split}
& \phantom{={}} \left(\frac{\Gamma(m-1)}{\Gamma(m/2)^2}\right)^{p-1}
(1-r^2)^{(1-m)(p-1)} 
\int_0^{2\pi} |g(e^{i\phi})|^p 
\int_0^{2\pi} \frac{1}{|1-re^{i(\theta - \phi)}|^m} 
\frac{d\theta}{2\pi} \frac{d\phi}{2\pi} 
\\
&\le 
\left(\frac{\Gamma(m-1)}{\Gamma(m/2)^2}\right)^{p-1}
\frac{\Gamma(m-1)}{\Gamma(m/2)^2}
(1-r^2)^{(1-m)(p-1)}
(1-r^2)^{1-m} \int_0^{2\pi} |g(e^{i\phi})|^p \frac{d\phi}{2\pi}
\\
&=
\left(\frac{\Gamma(m-1)}{\Gamma(m/2)^2}\right)^p
(1-r^2)^{(1-m)p} 
\|g\|_p^p,
\end{split}
\]
where we have again applied Lemma \ref{lemma:1minusrep}.
This proves the result for $p < \infty$.  
(Note that in the case $p = 1$, the above proof still works and really 
only involves Lemma \ref{lemma:1minusrep} and Tonelli's theorem, but 
not H\"{o}lder's inequality.)  

\end{proof}

For $f \in L^1(\mathbb{D})$, recall that the Bergman projection of $f$ is 
defined by 
\[
\bp f(z) = \frac{1}{\pi} 
    \int_{\Disc} \frac{f(w)}{(1-\conj{w}z)^{2}} dA(w),
\]
and thus
\begin{equation}\label{eq:bp_deriv}
\frac{d^n}{dz^n} (\bp f)(z) = \frac{(n+1)!}{\pi} 
\int_{\Disc} \frac{f(w)\conj{w}^n}{(1-\conj{w}z)^{2+n}} dA(w).
\end{equation}
Therefore, if $z=re^{i\theta}$, we have that 
\[
\begin{split}
\frac{d^n}{dz^n}(\bp f)(z) &= \frac{(n+1)!}{\pi}\int_0^1 \rho \int_0^{2\pi} 
           \frac{f(\rho e^{i\phi})\rho^n e^{-in\phi}}{(1-r \rho e^{i(\theta -
                 \phi)})^{2+n}}\, d\phi \, d\rho
\\
&= \frac{(n+1)!}{\pi}
\int_0^1 \rho \int_0^{2\pi} 
            \frac{f_\rho(e^{i\phi}) \rho^n e^{-in\phi}}
                 {(1-r \rho e^{i(\theta -
               \phi)})^{2+n}}\, d\phi \, d\rho,
\\
&=
2
\int_0^1 \rho r^{-n} \bp_{r\rho}^{(n)}f_\rho (e^{i\theta})\, d\rho,
\end{split}
\]
where $f_\rho(e^{i\theta}) = f(\rho e^{i\theta})$.

\begin{theorem}\label{thm:BPn_bound}
Let $1 \le p \le \infty$, and
let $k$ and $n$ be integers such that $0 \le k \le n$.  
Suppose that $f \in L^1(\mathbb{D})$, and that the restriction of $f$ to 
almost every circle of radius less than $1$ 
centered at the origin is in $W^{k,p}$.  
Then the following inequality holds: 
\[
\begin{split}
M_p \left(r, \frac{d^n}{dz^n}(\bp f(z))\right )  &\le 
2 \frac{\Gamma(n+1-k)\Gamma(n+2-k)}{\Gamma((n+2-k)/2)^2} 
\quad \times \\
&\ r^{-k} \int_0^1 \rho^{n+1-k} 
M_p\left( \frac{d^k}{d\theta^k} (e^{-in\theta} f), 
   \rho \right) (1-r^2\rho^2)^{k-n-1}  
d\rho.
\end{split}
\]
\end{theorem}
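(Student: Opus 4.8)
The plan is to combine the integral representation of $\frac{d^n}{dz^n}(\bp f)$ derived just above the statement with Theorem \ref{thm:Prn_bound}, using Minkowski's integral inequality to commute the $L^p(\partial\Disc)$ norm past the $\rho$-integral. All of the genuine analytic content has already been established, so the argument should be short and essentially mechanical.

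First I would recall the identity
\[
\frac{d^n}{dz^n}(\bp f)(re^{i\theta}) = 2\int_0^1 \rho\, r^{-n}\, \bp_{r\rho}^{(n)} f_\rho(e^{i\theta})\, d\rho,
\]
and observe that $M_p(r,\cdot)$ is exactly the normalized $L^p(\partial\Disc)$ norm in the variable $\theta$. Applying Minkowski's integral inequality, in the form $\|\int F(\rho,\cdot)\,d\rho\|_p \le \int \|F(\rho,\cdot)\|_p\,d\rho$ (valid for all $1\le p\le\infty$, where for $p=\infty$ the essential supremum of the integral is at most the integral of the essential supremum), moves the norm inside to give
\[
M_p\!\left(r, \tfrac{d^n}{dz^n}(\bp f)\right) \le 2\, r^{-n}\int_0^1 \rho\, \|\bp_{r\rho}^{(n)} f_\rho\|_p\, d\rho.
\]

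Next I would apply Theorem \ref{thm:Prn_bound} with radius $r\rho$ in place of $r$ and $f_\rho$ in place of $f$. The hypothesis that the restriction of $f$ to almost every circle lies in $W^{k,p}$ guarantees $f_\rho \in W^{k,p}(\partial\Disc)$ for a.e.\ $\rho$, which is precisely what that theorem requires, so its conclusion holds for almost every $\rho$. This bounds $\|\bp_{r\rho}^{(n)} f_\rho\|_p$ by the gamma factor $\tfrac{\Gamma(n+1-k)\Gamma(n+2-k)}{\Gamma((n+2-k)/2)^2}$ times $(r\rho)^{n-k}(1-r^2\rho^2)^{k-n-1}$ times $\bigl\|\tfrac{d^k}{d\theta^k}[f_\rho(e^{i\theta})e^{-in\theta}]\bigr\|_p$. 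Since differentiating $f_\rho(e^{i\theta})=f(\rho e^{i\theta})$ in $\theta$ is the angular derivative of $f$ evaluated on the circle of radius $\rho$, this last norm is exactly $M_p\bigl(\tfrac{d^k}{d\theta^k}(e^{-in\theta}f),\rho\bigr)$. Substituting and collecting powers — using $r^{-n}(r\rho)^{n-k}=r^{-k}\rho^{n-k}$ and $\rho\cdot\rho^{n-k}=\rho^{n+1-k}$ — reproduces the asserted inequality verbatim.

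The only point needing care is the legitimacy of the interchange in Minkowski's inequality. If the integral on the right-hand side is infinite there is nothing to prove, so I would assume it finite; then the pointwise (in $\rho$) bound from Theorem \ref{thm:Prn_bound}, together with the a.e.\ membership $f_\rho\in W^{k,p}$, renders the integrand jointly measurable and integrable, and Minkowski's integral inequality applies directly. No additional estimate is required, since the sharp constant and the $(1-r^2\rho^2)^{k-n-1}$ decay have all been packaged into Theorem \ref{thm:Prn_bound} via Lemma \ref{lemma:1minusrep}.
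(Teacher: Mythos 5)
Your proposal is correct and follows essentially the same route as the paper's proof: the integral identity $\frac{d^n}{dz^n}(\bp f)(re^{i\theta}) = 2\int_0^1 \rho\, r^{-n}\, \bp_{r\rho}^{(n)} f_\rho(e^{i\theta})\, d\rho$, Minkowski's integral inequality to pass the $L^p(\partial\Disc)$ norm inside the $\rho$-integral, then Theorem \ref{thm:Prn_bound} at radius $r\rho$, with the same bookkeeping of powers of $r$ and $\rho$. Your handling of $p=\infty$ (where the interchange is trivial, matching the paper's remark that Minkowski is not needed there) and your attention to measurability are consistent with, and slightly more explicit than, the paper's own argument.
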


We make the following remark about this theorem: since 
\begin{equation}\label{eq:simple_deriv_inequality}
\left| \frac{d^k}{d\theta^k} (e^{-in\theta} f) \right|
\le
\sum_{j=0}^k \binom{k}{j} n^{k-j} \left| \frac{d^j}{d\theta^j} f \right|,
\end{equation}
it is not hard to use the above theorem to bound $M_p(r, (\bp f)^{(n)})$ 
strictly in terms of the integral means of the first $k$ derivatives of 
$f$ in the $\theta$ direction. 
\begin{proof}
Again, first assume that $p < \infty$.  We have that  
\[
\begin{split}
M_p \left(r, \frac{d^n}{dz^n}(\bp f(z))\right )
= \left( \frac{1}{2\pi} \int_0^{2\pi} \left| 
2 \int_0^1 \rho r^{-n} \bp_{r\rho}^{(n)}f_\rho (e^{i\theta}) d\rho
 \right|^p d\theta
\right)^{1/p}.
\end{split}
\]
By Minkowski's inequality, this is less than or equal to 
\[
2 \int_0^1 \left( \frac{1}{2\pi} \int_0^{2\pi} 
\rho^p r^{-pn} |\bp_{r\rho}^{(n)}f_\rho (e^{i\theta})|^p
  d\theta \right)^{1/p} d\rho.
\]
By Theorem \ref{thm:Prn_bound}, this is less than or equal to 
\[
2 \frac{\Gamma(n+1-k)\Gamma(n+2-k)}{\Gamma((n+2-k)/2)^2}
r^{-k}  \int_0^1 \rho^{n+1-k} \left\| 
   \frac{d^k}{d\theta^k} (e^{-in\theta} f_\rho) \right\|_p 
    (1-r^2\rho^2)^{k-n-1}  
      d\rho
\]
which equals
\[
\begin{split}
2 \frac{\Gamma(n+1-k)\Gamma(n+2-k)}{\Gamma((n+2-k)/2)^2}
r^{-k}  \int_0^1 \rho^{n+1-k} 
M_p\left( \frac{d^k}{d\theta^k} (e^{-in\theta} f), \rho 
\right) (1-r^2\rho^2)^{k-n-1}  
d\rho.
\\
\end{split}
\]
The proof is slightly easier in the case $p = \infty$, as we 
do not need Minkowski's inequality. 
Alternately, to see that the theorem still holds for $p = \infty$, 
we can take the 
limit in the bound as $p = \infty$, using the monotone convergence 
theorem and the fact that $M_p(r,f)$ increases with $p$. 
\end{proof}

We now discuss Lipschitz and Lebesgue-Lipschitz classes, since they are 
relevant to some corollaries which we are about to prove.
A function $f$ is said to be Lipschitz of order $\alpha$ 
for $0 < \alpha \le 1$ if there is 
some constant $A$ such that $|f(x) - f(y)| \le A|x-y|^\alpha$ for all 
$x$ and $y$ in its domain.  
The class of all such functions is denoted by $\Lambda_\alpha$. 
For a function $f$ defined on the unit circle, we define its 
integral modulus of continuity of order $p$ for $p < \infty$ by 
\[
\omega_p(t,f) = \sup_{0 < h \le t}
                \left[ \frac{1}{2\pi} \int_0^{2\pi} 
                |f(x+h) - f(x)|^p \right]^{1/p}.
\]
If $\omega_p(t,f) = O(t^\alpha)$ for some $\alpha$ such that 
$0 < \alpha \le 1$, we say that $f$ belongs to the 
Lebesgue-Lipschitz class $\Lambda_{\alpha,p}$. 
For $p = \infty$, we define $\Lambda_{\alpha,\infty} = \Lambda_\alpha$. 

We will need
Theorem 5.4 in 
\cite{D_Hp}, which states that an analytic function is in 
$H^p$ for $1 \le p < \infty$ 
and has boundary values in $\Lambda_{\alpha}^p$ if and only if 
the integral means of its derivative satisfy 
$M_p(r,f') = O((1-r)^{-1+\alpha})$.
We will also use Theorem 5.1 from the same reference, which says that 
an analytic function is in 
$H^\infty$ and has boundary values in 
$\Lambda_{\alpha,\infty}$ if and only if 
the integral means of its derivative 
satisfy $M_\infty(r,f') = O((1-r)^{-1+\alpha})$.
(As stated, the theorem has the function being continuous in 
$\overline{\Disc}$ in place of its being in $H^\infty$, but any 
analytic function in $H^\infty$ with Lipschitz (or even continuous) 
boundary values is continuous in $\overline{\Disc}$.)
\begin{theorem}\label{thm:bp-lipschitz}
Let $1 \le p \le \infty$ and $0 < \alpha < 1$ and $n \geq 1$.
Suppose that $f$ is measurable in $\Disc$ and that the restriction of $f$ to 
almost every circle of radius less than $1$ centered at the origin is 
in $W^{n,p}$. 
Suppose that 
$M_p \left( \frac{d^{n}}{d\theta^{n}} f, r \right) = O( (1-r)^{-1+\alpha})$ 
Then $\bp(f)^{(n-1)} \in H^p$, and in fact the boundary values of 
$\bp(f)^{(n-1)}$ are 
in the Lebesgue-Lipschitz space $\Lambda_{\alpha,p}$ .
\end{theorem}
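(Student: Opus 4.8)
The plan is to reduce the entire statement to a single integral-mean estimate on the top $z$-derivative and then invoke the Hardy-space characterization of Lipschitz classes. Write $F = \bp(f)^{(n-1)}$, which is analytic in $\Disc$, so that $F' = \bp(f)^{(n)}$. By Theorem 5.4 of \cite{D_Hp} (for $1 \le p < \infty$) and Theorem 5.1 of \cite{D_Hp} (for $p = \infty$), it suffices to prove the single bound
\[
M_p\!\left(r,\bp(f)^{(n)}\right) = O\!\left((1-r)^{-1+\alpha}\right)
\quad\text{as } r \to 1^-,
\]
since those theorems then give $F \in H^p$ with boundary values in $\Lambda_{\alpha,p}$, which is exactly the conclusion. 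The rest of the argument is devoted to this derivative estimate.

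Before applying Theorem \ref{thm:BPn_bound} I would normalize $f$. Let $\hat f_0(\rho) = \frac{1}{2\pi}\int_0^{2\pi} f(\rho e^{i\theta})\,d\theta$ be the angular mean and set $g = f - \hat f_0$, so that $g$ is mean-zero on almost every circle and has the same angular derivatives of every positive order as $f$. Since the Bergman projection of a radial function is constant (only the zeroth angular Fourier mode of $f$ contributes to the constant Taylor coefficient of $\bp f$), we have $\bp(\hat f_0) = \text{const}$ and hence $\bp(f)^{(n)} = \bp(g)^{(n)}$ for $n \ge 1$. The function $g$ inherits all structural hypotheses of Theorem \ref{thm:BPn_bound}, and applying that theorem to $g$ with $k = n$ — for which the leading constant collapses to $2$, since $\Gamma(1) = \Gamma(2) = 1$ — yields
\[
M_p\!\left(r,\bp(f)^{(n)}\right) \le 2\,r^{-n}\int_0^1 \rho\,
M_p\!\left(\tfrac{d^n}{d\theta^n}(e^{-in\theta}g),\rho\right)(1-r^2\rho^2)^{-1}\,d\rho.
\]

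The integrand must now be controlled using the hypothesis, which bounds only the top angular derivative; this is the main obstacle. By the Leibniz expansion \eqref{eq:simple_deriv_inequality}, $M_p(\tfrac{d^n}{d\theta^n}(e^{-in\theta}g),\rho)$ is dominated by a finite combination of the quantities $M_p(\tfrac{d^j}{d\theta^j}g,\rho)$ for $0 \le j \le n$, so all the lower-order angular derivatives intrude. I would resolve this with the mean-zero normalization together with a Poincaré/Wirtinger inequality: for each fixed $\rho$, the function $\tfrac{d^j}{d\theta^j}g$ is mean-zero on the circle for every $0 \le j \le n-1$ (being either $g$ itself or a derivative of a periodic function), so $M_p(\tfrac{d^j}{d\theta^j}g,\rho) \le C\,M_p(\tfrac{d^{j+1}}{d\theta^{j+1}}g,\rho)$ with a constant depending only on $p$. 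Iterating up to $j = n$ gives $M_p(\tfrac{d^j}{d\theta^j}g,\rho) \le C\,M_p(\tfrac{d^n}{d\theta^n}f,\rho) = O((1-\rho)^{-1+\alpha})$ for every $j$, and hence $M_p(\tfrac{d^n}{d\theta^n}(e^{-in\theta}g),\rho) = O((1-\rho)^{-1+\alpha})$.

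It remains to estimate the resulting integral, and here Lemma \ref{yam1} does the work. Substituting the previous bound reduces matters to showing $\int_0^1 \rho\,(1-\rho)^{-1+\alpha}(1-r^2\rho^2)^{-1}\,d\rho = O((1-r)^{-1+\alpha})$. Since $1 - r^2\rho^2 \ge 1 - r^2\rho$ on $[0,1]$, this integral is at most $\int_0^1 (1-\rho)^{-(1-\alpha)}(1-r^2\rho)^{-1}\rho\,d\rho$, which is precisely the form treated in Lemma \ref{yam1} with $s = 1-\alpha$, $m = 1$, $k = 1$, and $x = r^2$; the hypotheses $s < 1$ and $m + s > 1$ hold exactly because $0 < \alpha < 1$. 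Lemma \ref{yam1} then gives the bound $C(1-r^2)^{-1+\alpha} \le C(1-r)^{-1+\alpha}$. Combining this with the factor $r^{-n}$, which is bounded as $r \to 1^-$, yields $M_p(r,\bp(f)^{(n)}) = O((1-r)^{-1+\alpha})$, completing the reduction and hence the proof. The argument applies verbatim when $p = \infty$, using Theorem 5.1 of \cite{D_Hp} in place of Theorem 5.4 and the $L^\infty$ form of the Poincaré inequality.
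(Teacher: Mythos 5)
Your argument is correct, and its computational core is the same as the paper's: both proofs apply Theorem \ref{thm:BPn_bound} with $k=n$ (where the constant collapses to $2$), bound the resulting integral by Lemma \ref{yam1} with $s=1-\alpha$, $m=1$, $k=1$ (the hypotheses $s<1$, $m+s>1$ holding precisely because $0<\alpha<1$), and conclude via Theorems 5.4 and 5.1 of \cite{D_Hp}. Where you genuinely diverge is in the step the paper disposes of in one line, namely the assertion that the hypotheses imply $M_p\bigl(r, \tfrac{d^n}{d\theta^n}(e^{-in\theta}f)\bigr) = O((1-r)^{-1+\alpha})$. By the Leibniz expansion \eqref{eq:simple_deriv_inequality} this requires control of $M_p(\tfrac{d^j}{d\theta^j}f,r)$ for all $0\le j\le n$, while the hypothesis bounds only $j=n$; the intermediate orders $1\le j\le n-1$ are recoverable by Poincar\'e--Wirtinger since those derivatives have zero mean on each circle, but the $j=0$ term is genuinely uncontrolled: the radial function $f(\rho e^{i\theta})=(1-\rho)^{-1+\alpha/2}$ satisfies every hypothesis of the theorem (its angular derivatives vanish), yet $M_p\bigl(\rho,\tfrac{d^n}{d\theta^n}(e^{-in\theta}f)\bigr)=n^n(1-\rho)^{-1+\alpha/2}$, which is not $O((1-\rho)^{-1+\alpha})$. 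So the paper's intermediate claim is false as literally stated, and your normalization $g=f-\hat f_0$ --- together with the observation that $\bp(\hat f_0)$ is constant, hence invisible to $\bp(f)^{(n)}$ for $n\ge 1$ --- is exactly the repair needed to make the theorem's conclusion come out anyway; your iterated Poincar\'e inequality then legitimately yields $M_p\bigl(\tfrac{d^n}{d\theta^n}(e^{-in\theta}g),\rho\bigr)=O((1-\rho)^{-1+\alpha})$, and as a bonus it shows $g\in L^1(\Disc)$ automatically. The one caveat you share with the paper is that $f\in L^1(\Disc)$ must in fact be assumed (it is needed for $\bp f$ to exist and for $\hat f_0\in L^1$); the paper claims it follows from the hypotheses, but the radial example $f(\rho e^{i\theta})=(1-\rho)^{-2}$ shows it does not. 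In short: same skeleton and the same two key lemmas, but your proof fills --- indeed corrects --- a real gap in the paper's one-line reduction.
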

\begin{proof}
Note that the assumptions imply that 
$M_p(r, \frac{d^n}{d\theta^n}(e^{-i\theta}f)) = O((1-r)^{-1+\alpha})$, 
and that $f \in L^1(\Disc)$. 
By the above theorem,
\[ 
\begin{split}
M_p(r, \bp(f)^{(n)}) &\le C \int_0^1 (1-\rho)^{-1+\alpha} (1-r^2 \rho^2)^{-1}
   \rho \, d \rho \\
&\le C \int_0^1 (1-\rho)^{-1+\alpha} (1-r \rho)^{-1} \rho \, d \rho
\end{split}
\]
for $r$ near 
enough to $1$, where $C$ is a constant.
By Lemma \ref{yam1}, the above expression is less than or equal to 
$C (1-r)^{\alpha}$ for $r$ near enough to $1$, where 
$C$ is another constant.      
But this implies that 
$\bp(f)^{(n-1)} \in H^p$ and has boundary values in $\Lambda_{\alpha}^p$.
\end{proof}

We now state a corollary related to our original motivation for 
studying this problem.  If we are given an $f \in A^p$ with 
unit norm, 
where $1 < p < \infty$ and $p$ has conjugate exponent $q$, then 
there is a function $k \in A^q$ (unique up to a positive scalar multiple)
such that $f$ solves the extremal problem of maximizing 
$\Rp \int_{\Disc} g \overline{k} \, d\sigma$ among all functions $g$ 
of unit $A^p$ norm. 
The broad question that first motivated our study was: 
if we know that $f$ has certain regularity, can 
we say anything about regularity properties for $k$?
The next corollary is an example of this. 
\begin{corollary}
Let $2 \le p < \infty$, 
let $p-1 \le s \le \infty$, 
let $q$ be the conjugate exponent to $p$, and let $0 < \alpha < 1$. 
Let $f$ be analytic and suppose that 
$f \in H^s$ with boundary values in 
$\Lambda_{\alpha}^s$, and that 
$\|f\|_{A^p}=1$. 
Let $k$ a function in $A^q$ such that $f$ solves the 
extremal problem of finding a function $g$ of unit $A^p$ norm 
maximizing $\Rp \int_{\Disc} g \overline{k} \, d\sigma$. 
Then $k \in H^{s/(p-1)}$ and the boundary values of $k$ are 
in $\Lambda_{\alpha, s/(p-1)}$.
\end{corollary}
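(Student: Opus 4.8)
The plan is to reduce the entire statement to Theorem \ref{thm:bp-lipschitz} applied to the function $g = |f|^{p-2} f$. Recall from the discussion preceding this corollary (and from Theorem 2.2 of \cite{tjf3}) that the integral kernel $k$ is a nonzero constant multiple of $\bp(|f|^{p-2} f) = \bp(g)$; since membership in $H^{q_1}$ and in $\Lambda_{\alpha, q_1}$ is unaffected by multiplication by a constant, it suffices to prove the conclusion for $\bp(g)$. Writing $q_1 = s/(p-1)$, I would apply Theorem \ref{thm:bp-lipschitz} with $n = 1$ and with $q_1$ playing the role of $p$ there, so that its conclusion reads precisely that $\bp(g) \in H^{q_1}$ with boundary values in $\Lambda_{\alpha, q_1}$. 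To invoke it I must verify its two hypotheses for $g$: first, that the restriction of $g$ to almost every circle centered at the origin lies in $W^{1,q_1}(\partial \Disc)$, and second, that $M_{q_1}\!\left(\frac{d}{d\theta} g, r\right) = O((1-r)^{-1+\alpha})$. (One also checks $g \in L^1(\Disc)$: since $|g| = |f|^{p-1}$ and $f \in A^p$, in fact $g \in L^q(\Disc)$ because $(p-1)q = p$.)

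The key computation is a pointwise bound on the angular derivative of $g$. On a fixed circle $f$ is a smooth function of $\theta$, and the chain rule applied to $g = |f|^{p-2} f$ gives $\frac{d}{d\theta} g = (p-2)|f|^{p-4}\Rp\!\left(\conj{f}\,\frac{d}{d\theta}f\right) f + |f|^{p-2} \frac{d}{d\theta} f$, from which one obtains $\left| \frac{d}{d\theta} g \right| \le (p-1)\, |f|^{p-2} \left| \frac{d}{d\theta} f \right|$. (The same bound follows cleanly by writing $f = |f| e^{i\psi}$ and comparing the radial and angular parts, using $(p-1)^2 \ge 1$.) This is exactly where the hypothesis $p \ge 2$ enters: it guarantees that $|f|^{p-2}$ stays bounded near the zeros of $f$, so the right-hand side is bounded on each circle of radius $r < 1$ and $g$ is Lipschitz in $\theta$ there. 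This gives the Sobolev hypothesis on every circle that avoids the (isolated) zeros of $f$, hence on almost every circle. Since $f$ is analytic, $\frac{d}{d\theta} f(re^{i\theta}) = i r e^{i\theta} f'(re^{i\theta})$, so $\left| \frac{d}{d\theta} f \right| = r |f'(re^{i\theta})|$.

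It remains to establish the integral-mean hypothesis. By Theorem 5.4 of \cite{D_Hp} (and Theorem 5.1 for $s = \infty$), the assumption $f \in H^s \cap \Lambda_{\alpha,s}$ is equivalent to $M_s(f', r) = O((1-r)^{-1+\alpha})$, hence $M_s\!\left(\frac{d}{d\theta} f, r\right) = O((1-r)^{-1+\alpha})$, while $M_s(f,r) \le \|f\|_{H^s}$ is bounded. Raising the pointwise bound to the power $q_1$ and integrating, I would estimate $\frac{1}{2\pi}\int_0^{2\pi} |f|^{(p-2)q_1}\left|\frac{d}{d\theta}f\right|^{q_1} d\theta$ by H\"older's inequality with conjugate exponents $a = (p-1)/(p-2)$ and $b = p-1$, chosen precisely so that $(p-2)q_1 a = s$ and $q_1 b = s$, turning both factors into $L^s$ integral means. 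This yields $M_{q_1}\!\left(\frac{d}{d\theta}g, r\right) \le (p-1)\, M_s(f,r)^{p-2}\, M_s\!\left(\frac{d}{d\theta}f, r\right) = O((1-r)^{-1+\alpha})$. (When $p = 2$ the factor $|f|^{p-2}$ is trivial, $q_1 = s$, and no H\"older step is needed; when $s = \infty$ the pointwise bound gives the estimate directly.) Applying Theorem \ref{thm:bp-lipschitz} then completes the proof. The genuine subtlety, and the reason for requiring $p \ge 2$, is the regularity claim at the zeros of $f$; the exponent bookkeeping in the H\"older step, though it must be arranged carefully, is routine once these exponents are identified.
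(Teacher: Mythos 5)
Your proposal is correct and follows essentially the same route as the paper's own proof: reduce to $\bp(|f|^{p-2}f)$ via the fact that $k$ is a constant multiple of it, establish the pointwise bound $\bigl|\tfrac{d}{d\theta}(|f|^{p-2}f)\bigr| \le (p-1)|f|^{p-2}|f'|$ (the paper does this via Cauchy--Schwarz on $\langle u_\theta, v_\theta\rangle$ and $\langle u, v\rangle$, which is your $\Rp(\conj{f}f_\theta)$ computation in real coordinates), apply H\"older with exactly the exponents $(p-1)/(p-2)$ and $p-1$ to get $M_{s/(p-1)}$ control, and invoke Theorem \ref{thm:bp-lipschitz} with $n=1$. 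Your explicit verification of the $W^{1,q_1}$ hypothesis near the zeros of $f$ (using $p \ge 2$ and that a.e.\ circle avoids the isolated zeros) is a detail the paper passes over silently, but it does not change the argument.
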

\begin{proof}
By the above mentioned Theorem 5.4 from \cite{D_Hp}, we have that 
$M_s(r,f') = O((1-r)^{-1+\alpha})$. 
Now, if we write $f = u + iv$, we have 
\[
\frac{\partial}{\partial \theta} (|f|^{p-2} f) = 
(p-2)|f|^{p-4}[u u_\theta + v v_\theta]f + |f|^{p-2} f_\theta.
\]
The absolute value of the above expression is bounded by 
\(
(p-1)|f|^{p-2} |f'|,
\)
where we have used the 
fact that $f_{\theta} = iz f'$ and the  
Cauchy-Schwarz inequality applied to 
$\langle u_\theta, v_\theta \rangle$ and $\langle u, v \rangle.$
Thus we have 
\[
M_{s/(p-1)}\left(r,\frac{\partial}{\partial \theta} (|f|^{p-2} f)\right) 
\le (p - 1) \|f\|_{H^s}^{p-2} M_s(r,f') 
\le C (1-r)^{-1 + \alpha},
\]
where in the first inequality we have used 
H\"{o}lder's inequality, and in the second we have used the hypothesis 
about the growth of the integral means of $f'$. 
Also, it is clear that $M_{s/(p-1)}(r, |f|^{p-2} f)$ is bounded. 
By Theorem \ref{thm:bp-lipschitz}, this implies that 
$\bp(|f|^{p-2} f) \in H^{s/(p-1)}$ and that 
$\bp(|f|^{p-2} f)$ has boundary values 
in $\Lambda^{s/(p-1)}_{\alpha}$. 
But since $\bp(|f|^{p-2} f)$ is a constant multiple of $k$, the 
corollary holds. 
\end{proof}

The next corollary is an analogous result for higher regularity. 
\begin{corollary} Let $n \geq 0$ be an integer, and suppose that 
$n + 1 \leq p < \infty$ and that $p-1 \leq s \leq \infty$. 
Also suppose that $f^{(n-1)} \in \Lambda^s_\alpha$, where 
$0 < \alpha < 1$, and that
$\|f\|_{A^p}=1$. 
Let $k$ a function in $A^q$ such that $f$ solves the 
extremal problem of finding a function $g$ of unit $A^p$ norm 
maximizing $\Rp \int_{\Disc} g \overline{k} \, d\sigma$. 
Then $k^{(n-1)} \in H^{s/(p-1)}$ and the boundary values of $k^{(n-1)}$ are 
in $\Lambda_{\alpha, s/(p-1)}$.
\end{corollary}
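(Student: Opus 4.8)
The plan is to reduce everything to the hypotheses of Theorem~\ref{thm:bp-lipschitz} applied to the function $g := |f|^{p-2}f$, with the exponent $s/(p-1)$ playing the role of $p$ and the order being $n$. By Theorem~2.2 of \cite{tjf3}, $k$ is a positive constant multiple of $\bp(g)$, so it suffices to show that (i) the restriction of $g$ to almost every circle centered at the origin lies in $W^{n,s/(p-1)}$, and (ii) $M_{s/(p-1)}\!\left(r,\frac{d^n}{d\theta^n}g\right)=O((1-r)^{-1+\alpha})$. Theorem~\ref{thm:bp-lipschitz} will then give $\bp(g)^{(n-1)}\in H^{s/(p-1)}$ with boundary values in $\Lambda_{\alpha,s/(p-1)}$, and hence the same for $k^{(n-1)}$.

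The heart of the argument is a combinatorial bound on $\frac{d^n}{d\theta^n}g$. Writing $g=(f\conj{f})^{(p-2)/2}f$ and differentiating in $\theta$ via the product and chain rules, I would show that $\frac{d^n}{d\theta^n}g$ is a finite sum of terms, each a constant times $|f|^{p-2-2m}$ multiplied by a product of factors of the form $\frac{d^{a}}{d\theta^{a}}f$ and $\frac{d^{b}}{d\theta^{b}}\conj{f}$. Taking absolute values and using $\bigl|\frac{d^{b}}{d\theta^{b}}\conj{f}\bigr|=\bigl|\frac{d^{b}}{d\theta^{b}}f\bigr|$, each such term is dominated by
\[
|f|^{\,p-1-\ell}\prod_{i=1}^{\ell}\Bigl|\tfrac{d^{a_i}}{d\theta^{a_i}}f\Bigr|,\qquad a_i\ge 1,\ \ \sum_{i=1}^{\ell}a_i=n,\ \ 1\le\ell\le n,
\]
where the exponent $p-1-\ell$ is forced by homogeneity (the whole expression is homogeneous of degree $p-1$ in $f$). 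The crucial observation is that the hypothesis $n+1\le p$ gives $\ell\le n\le p-1$, so $p-1-\ell\ge0$ and no negative powers of $|f|$ ever appear; this is exactly where the assumption $n+1\le p$ is used. Since $\frac{d^{a}}{d\theta^{a}}f=\sum_{j=1}^{a}c_j z^{j}f^{(j)}$ for constants $c_j$ and $|z|\le 1$, one also has $\bigl|\frac{d^{a}}{d\theta^{a}}f\bigr|\le C\sum_{j=1}^{a}|f^{(j)}|$.

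Next I would collect the needed growth rates. Since $f^{(n-1)}\in\Lambda^{s}_{\alpha}$, the cited Theorem~5.4 of \cite{D_Hp} gives $M_s(r,f^{(n)})=O((1-r)^{-1+\alpha})$, while $f^{(n-1)}\in H^{s}$ forces $f^{(j)}\in H^{s}$ for $0\le j\le n-1$, so that $M_s(r,f^{(j)})$ is bounded for those $j$. Consequently $M_s(r,\frac{d^{a}}{d\theta^{a}}f)$ is bounded for $a\le n-1$ and is $O((1-r)^{-1+\alpha})$ for $a=n$. Applying the generalized H\"older inequality to each term above, assigning the exponent $s/(p-1-\ell)$ to $|f|^{p-1-\ell}$ and the exponent $s$ to each of the $\ell$ remaining factors (the reciprocals sum to $(p-1)/s$), I obtain
\[
M_{s/(p-1)}\Bigl(r,\ |f|^{\,p-1-\ell}\prod_{i=1}^{\ell}\bigl|\tfrac{d^{a_i}}{d\theta^{a_i}}f\bigr|\Bigr)
\le M_s(r,f)^{\,p-1-\ell}\prod_{i=1}^{\ell}M_s\Bigl(r,\tfrac{d^{a_i}}{d\theta^{a_i}}f\Bigr).
\]
Because $\sum a_i=n$ with each $a_i\ge1$, at most one index can satisfy $a_i=n$, and only when $\ell=1$; every other factor is bounded. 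Hence each term is $O((1-r)^{-1+\alpha})$ (the bounded terms trivially so, since $-1+\alpha<0$), and summing the finitely many terms yields (ii). The regularity statement (i) follows from the same pointwise domination together with $n\le p-1$, which keeps $g$ and its first $n$ angular derivatives locally integrable even across the zeros of $f$. I expect the main obstacle to be the bookkeeping behind the pointwise bound in the second step --- organizing the terms produced by $n$ differentiations of $(f\conj{f})^{(p-2)/2}f$ and verifying both the homogeneity exponent $p-1-\ell$ and its nonnegativity. A secondary technical point is checking hypothesis (i): establishing genuine $W^{n,s/(p-1)}$ membership on circles passing near the zeros of $f$, where $|f|^{p-2}f$ behaves like $|z-z_0|^{p-1}$ and the condition $n\le p-1$ is precisely what guarantees the required absolute continuity of the $(n-1)$st angular derivative.
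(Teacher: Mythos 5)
Your proposal is correct and follows essentially the same route as the paper: expand $\frac{d^n}{d\theta^n}(|f|^{p-2}f)$ by the Leibniz rule into terms of total homogeneity $p-1$ in $f$, use $p \geq n+1$ to keep the power of $|f|$ nonnegative, apply H\"older's inequality with exponents summing to $(p-1)/s$ so that all terms have bounded $M_{s/(p-1)}$ means except those containing the single possible $n^{\text{th}}$-order factor, which are $O((1-r)^{-1+\alpha})$, and then conclude via Theorem \ref{thm:bp-lipschitz} together with the fact that $k$ is a constant multiple of $\bp(|f|^{p-2}f)$. Your additional attention to the weak-derivative issue at zeros of $f$ (where $n \leq p-1$ secures absolute continuity of the $(n-1)^{\text{st}}$ angular derivative) is a point the paper's proof glosses over, but it refines rather than alters the argument.
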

\begin{proof}
We have that $\partial^n/\partial \theta^n (f^{p/2} \overline{f}^{(p/2)-1})$ 
equals
\[
\sum_{k=0}^n \binom{n}{k} \partial_\theta^k (f^{p/2}) 
\partial_\theta^{n-k} (\overline{f})^{p/2}.
\]
Now each term of $\partial_\theta^k (f^{p/2})$ is of the form 
$C f^{\alpha} g_1 g_2 \cdots g_m$ where $C$ is a constant, 
each $g_m$ is some $\theta$
derivative of $f$ of order at most $n$, and $\alpha + m = p/2$. Also, only 
one $g_j$ can be a $k^{\text{th}}$ derivative. 
Each term of $\partial_\theta^k (\overline{f}^{p/2-1})$ is of the form 
$C \overline{f}^{\alpha} g_1 g_2 \cdots g_m$ where each $g_j$ is some 
derivative of $\overline{f}$ of order at most $n-k$, and $\alpha + m = p/2$. 
Also, only 
one $g_j$ can be a $(n-k)^{\text{th}}$ derivative. 

Thus, each term of  
$\partial^n/\partial \theta^n (f^{p/2} \overline{f}^{(p/2)-1})$ 
is of the form 
$C f^{\alpha} \overline{f}^\beta g_1 g_2 \cdots g_m$, where 
$\alpha + \beta + m = p - 1$ and each $g_j$ is a $\theta$ derivative of 
either $f$ or $\overline{f}$ of order at most $n$.  
Note that because $p \geq n + 1$, we have that 
$\alpha + \beta$ is nonnegative. 
Then $f^\alpha \conj{f}^{\beta} \in L^{s/(\alpha + \beta)}$ and each $g_j$ 
is in $H^s$, unless $g_j$ is an $n^{\text{th}}$ derivative. Thus, all terms 
(except for the exceptional ones with an $n^{\text{th}}$ derivative) 
are in $H^{s/(p-1)}$, by H\"{o}lder's inequality.  
Also, since $M_p(r,\partial^n_\theta f) = O((1-r)^{-1+\alpha})$, 
H\"{o}lder's inequality again shows that 
the terms with an $n^{\text{th}}$ derivative 
have integral means of order $s/(p-1)$ that are 
$O((1-r)^{-1+\alpha})$.  The result now follows as in the above corollary. 
\end{proof}

These corollaries are similar to 
Theorem 4.3 in \cite{tjf2}, which is proved by very different methods.  
That theorem is only proved 
for $p$ an even integer.  It requires us assume that 
$f \in H^s$ for $p-1 < s < \infty$, 
and yields that $k \in H^{s/(p-1)}$.  
Whether Theorem 4.3 from 
\cite{tjf2} holds when $p$ is not an even integer is still an 
open question.

\section{Bounds on Sobolev norms of Bergman Projections}

We now illustrate how our previous results can be used to 
bound certain weighted $L^p$ norms of derivatives of 
Bergman projections by other weighted $L^p$ norms of 
$\theta$ derivatives of the original function. 
We will need the following lemma. 
\begin{lemma}\label{yam2}
Suppose $1<p<\infty$ and that $j,k > -1$ and $m > 0$
and $u < 1$, and that
$u > 1 - mp$.
Set $w = u + (m-1)p$.  
For a measurable function $f$ define 
\[
g(x) = \int_0^1 \frac{|f(y)|}{(1-xy)^m} y^k \, dy,
\]
where we allow $g(x)$ to take on $\infty$ as a value. 
Then 
\[\|g\|_{L^p(x^j(1-x)^{-u}\, dx)} \le 
C_2 \|f\|_{L^p(x^k(1-x)^{-w}\, dx)},\]
where $L^p$ spaces in the bound 
are on the interval $[0,1]$, and where  
\[
\begin{split}
&C_2 = C_2(p,m,k,j,u) = \\
&\inf_{\substack{
   0 < b < m\\ \text{$a$ satisfies all of 
   \eqref{eq:4aconditions}}}}
C_1(aq, (m-b)q, k)^{1/q} C_1(ap+(m-b)p+u-(p/q), bp, j)^{1/p}.
\end{split}
\]
\end{lemma}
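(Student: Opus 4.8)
The plan is to prove the estimate by a weighted Schur test carried out by hand, splitting the kernel with H\"{o}lder's inequality and then invoking Lemma \ref{yam1} twice. Throughout let $q = p/(p-1)$ be the conjugate exponent, and fix a splitting parameter $b$ with $0 < b < m$ together with an exponent $a$ satisfying the constraints \eqref{eq:4aconditions}. Since $|f| \ge 0$, every integrand below is nonnegative, so all interchanges of order of integration are justified by Tonelli's theorem, and if the right-hand side of the asserted bound is infinite there is nothing to prove.

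First I would factor the integrand defining $g(x)$ as
\[
\frac{|f(y)|\, y^k}{(1-xy)^m}
= \underbrace{\frac{(1-y)^{-a}\,y^{k/q}}{(1-xy)^{m-b}}}_{=:F_1(x,y)}
\cdot
\underbrace{\frac{|f(y)|\,(1-y)^{a}\,y^{k/p}}{(1-xy)^{b}}}_{=:F_2(x,y)},
\]
which is legitimate because $(m-b)+b = m$, while $k/q + k/p = k$ and the powers of $(1-y)$ cancel. Applying H\"{o}lder's inequality with exponents $q$ and $p$ then gives
\[
g(x) \le \left(\int_0^1 \frac{(1-y)^{-aq}\,y^{k}}{(1-xy)^{(m-b)q}}\,dy\right)^{1/q}
\left(\int_0^1 \frac{|f(y)|^p (1-y)^{ap}\,y^{k}}{(1-xy)^{bp}}\,dy\right)^{1/p}.
\]
The first factor is exactly the integral controlled by Lemma \ref{yam1} with parameters $s = aq$, $m' = (m-b)q$, $k' = k$; two of the inequalities in \eqref{eq:4aconditions} are precisely what make $aq < 1$ and $(m-b)q + aq > 1$, so the lemma applies and bounds it by $C_1(aq,(m-b)q,k)^{1/q}(1-x)^{1/q - a - (m-b)}$.

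Next I would raise the resulting inequality for $g(x)$ to the $p$-th power, multiply by the weight $x^j(1-x)^{-u}$, and integrate in $x$. The power of $(1-x)$ from the first factor combines with $(1-x)^{-u}$ to give the exponent $p/q - ap - (m-b)p - u$, and after swapping the order of integration by Tonelli the inner $x$-integral becomes
\[
\int_0^1 \frac{x^j (1-x)^{-(ap+(m-b)p+u-p/q)}}{(1-xy)^{bp}}\,dx.
\]
This is again of the form handled by Lemma \ref{yam1}, now with $s = ap+(m-b)p+u-p/q$, $m' = bp$, $k' = j$; the remaining two conditions in \eqref{eq:4aconditions} guarantee $s < 1$ and $s + bp > 1$. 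This produces the factor $C_1(ap+(m-b)p+u-p/q,\,bp,\,j)$ multiplied by a power $(1-y)^{1 - s - bp}$.

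The final step is to check that the exponents collapse correctly, and this is the one genuinely delicate point. Collecting the power of $(1-y)$ contributed by $F_2$, namely $ap$, together with $1-s-bp$ from the second application of Lemma \ref{yam1}, and using $p/q = p-1$, the total exponent on $(1-y)$ is $1 - mp - u + p/q = p - mp - u = -\bigl(u + (m-1)p\bigr) = -w$, matching the right-hand weight exactly. Taking $p$-th roots gives the stated bound with constant $C_1(aq,(m-b)q,k)^{1/q}C_1(ap+(m-b)p+u-p/q,bp,j)^{1/p}$, and the infimum over admissible $(a,b)$ yields $C_2$. I expect the main obstacle to be organizational rather than conceptual: engineering the split so that the two invocations of Lemma \ref{yam1} land on exactly the arguments appearing in $C_2$, and checking that the hypotheses $u < 1$, $u > 1-mp$, and $m>0$ force the constraint region \eqref{eq:4aconditions} to be nonempty (otherwise $C_2 = \infty$ and the inequality is vacuous). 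The reconciliation of the residual $(1-y)$ power with $-w$ is where the definition $w = u + (m-1)p$ and the identity $p/q = p-1$ are forced on us, so it is the step I would verify most carefully.
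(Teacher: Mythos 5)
Your proposal is correct and follows essentially the same route as the paper's proof: the same H\"{o}lder factorization of the kernel with splitting parameters $a$ and $b$, the same two applications of Lemma \ref{yam1} with identical arguments $(aq,(m-b)q,k)$ and $(ap+(m-b)p+u-(p/q),bp,j)$, and the same exponent bookkeeping collapsing the residual $(1-y)$ power to $-w$ via $p/q=p-1$. The only detail the paper writes out that you merely flag is the explicit verification (from $u<1$, $u>1-mp$, and $0<b<m$) that the constraint region \eqref{eq:4aconditions} is nonempty, which is a short computation and does not affect the validity of your argument.
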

\begin{proof}
Let $q$ be the conjugate exponent to $p$. 
Choose $b$ so that $0 < b < m$. 
First note that the above conditions imply that 
\begin{subequations}
\begin{align}
1-m-\frac{u}{p} & < \frac{1}{q}\\
1-\frac{u}{p}-m & < 1 - \frac{u}{p} - (m - b)\\
\frac{1}{q}-(m-b) & < \frac{1}{q} \\
\frac{1}{q} - (m-b) &< 1 - \frac{u}{p} - (m-b).
\end{align}
\end{subequations}
Thus we can find a number $a$ satisfying
\begin{subequations}\label{eq:4aconditions}
\begin{align}
1-m-\frac{u}{p}  < & a \label{eq:4aconditions-agtnob}\\
\frac{1}{q}-(m-b)  < & a \label{eq:4aconditions-agtb}\\
& a  < \frac{1}{q} \label{eq:4aconditions-altnob}\\
& a   < 1 - \frac{u}{p} - m + b.\label{eq:4aconditions-altb}
\end{align}
\end{subequations}

We may assume without loss of generality that $f \ge 0$, since if 
the inequality holds for $|f|$ it holds for $f$. Now 
\[
\begin{split}
&\phantom{={}}\int_0^1 \frac{f(y)}{(1-xy)^m}y^k\, dy 
\\
&= 
\int_0^1 \frac{f(y)(1-y)^{a}}{(1-xy)^{b}} 
              \frac{(1-y)^{-a}}{(1-xy)^{m-b}} y^k dy
\\ &\le 
\left[ \int_0^1 \frac{ |f(y)|^p (1-y)^{ap}}{(1-xy)^{bp}} y^k dy
\right]^{1/p} \times
     \left[ \int_0^1 \frac{(1-y)^{-aq}}{(1-xy)^{(m-b)q}} y^k dy\right]^{1/q},
\end{split}
\]
by H\"{o}lder's inequality. 
But by Lemma \ref{yam1}, the above expression is less than or equal to 
\[
C_{1,1}^{1/q}
 (1-x)^{(1/q)-a-(m-b)} 
   \left[ \int_0^1 \frac{ |f(y)|^p (1-y)^{ap}}{(1-xy)^{bp}} y^k dy
     \right]^{1/p},
\]
where $C_{1,1} = C_1(aq, (m-b)q, k)$. 
This is valid because $aq + (m-b)q > 1$ and $aq < 1$, which follow from 
inequalities \eqref{eq:4aconditions-agtb} and \eqref{eq:4aconditions-altnob}.
So then 
\[
\begin{split}
&\phantom{={}} \|g\|^p_{L^p(x^j (1-x)^{-u}\, dx)} 
\\& = 
\int_0^1 \left| \int_0^1 \frac{f(y)}{(1-xy)^m}y^k\, dy\right|^p 
(1-x)^{-u} x^j \, dx \\&\le 
C_{1,1}^{p/q}
      \int_0^1 (1-x)^{(p/q)-ap-(m-b)p}
        \int_0^1 \frac{|f(y)|^p (1-y)^{ap}}{(1-xy)^{bp}} y^k \, dy \, 
                (1-x)^{-u} x^j \, dx\\
   &= 
C_{1,1}^{p/q}
   \int_0^1 |f(y)|^p (1-y)^{ap} \int_0^1
   \frac{(1-x)^{(p/q)-ap-(m-b)p-u}}{(1-xy)^{bp}} \,x^j \, dx \, y^k \,dy,
\end{split}
\]
by Tonelli's theorem for nonnegative functions.  
Applying the previous lemma again 
we see that this is less than or 
equal to 
\[
\begin{split}
&\phantom{={}}
C_{1,1}^{p/q} C_{1,2}
\int_0^1 |f(y)|^p (1-y)^{ap} (1-y)^{1+(p/q)-ap-(m-b)p-u-bp} y^k \, dy \\
&= C_{1,1}^{p/q} C_{1,2}
 \int_0^1 |f(y)|^p (1-y)^{-w} y^k \, dy  \\
&= C_{1,1}^{p/q} C_{1,2} \|f\|^p_{L^p(x^k(1-x)^{-w}\, dx)}
\end{split}
\]
where $C_{1,2} =  C_1(ap+(m-b)p+u-(p/q), bp, j)$
This works because $u + (m-b)p + ap - \tfrac{p}{q} < 1$ and 
$u + mp + ap - \tfrac{p}{q} > 1$, 
which follow from 
inequalities \eqref{eq:4aconditions-altb} and \eqref{eq:4aconditions-agtnob}, 
respectively. 
\end{proof}
Note that the proof works even if $g$ is equal to $\infty$ for 
some $x$, since 
H{\"o}lder's inequality holds even if the left or right sides are infinite, 
and Tonelli's theorem holds even if some of the integrals involved 
are infinite. 

One important case is when $j=k=m=1$ and $u=0$.  
In this case we can 
choose $a=1/(pq)$ and $b=1/p$, and then we see that 
$
C_2(p,1,1,1,0) \le C_1(1/p, 1, 1)^{1/q} C_1(1/q, 1, 1)^{1/p}.$
(We have tried to find a choice of $a$ and $b$ yielding a better 
bound on $C_2$, but were not able).
But by the remarks after Lemma \ref{yam1}, this is equal to 
\[
\left[ \frac{\Gamma(1/p) \Gamma(1/q)}{\Gamma(1)} \right]^{1/q}
\left[ \frac{\Gamma(1/q) \Gamma(1/p)}{\Gamma(1)} \right]^{1/p}
= \Gamma\left(\frac{1}{p}\right) \Gamma\left(1 - \frac{1}{p} \right) 
= \frac{\pi}{\sin(\pi / p)}
\]
by the reflection formula for the $\Gamma$ function.

It is interesting to note that the bounds in the following theorem do 
not depend on $p$. 
\begin{theorem}\label{yam3}
Let $1 \le p \le \infty$ and $1<s<\infty.$  
Suppose that $0 \le k \le n$, where $n$ and $k$ are integers.  Also 
suppose that $j -k > -1$ and $1-(n+1-k)s < u < 1$ and set $w = u + (n-k)s$. 
Also suppose that
the restriction of $f$ to 
almost every circle of radius less than $1$ centered at the origin 
is in $W^{k,p}$, and 
that $f$ is in $L^1(\Disc)$. 
Then
\[
\begin{split}
&\phantom{={}}
\left\{ \int_0^1 [M_p(r, (\bp f)^{(n)})]^s \, (1-r)^{-u} r^j \, 
                     dr \right\}^{1/s} 
\\ &\le 
C_3(s,n-k,j-k,u)
\left\{
 \int_0^{1} \left[M_p \left(\frac{d^k}{d\theta^k}(e^{-in\theta}f),r 
             \right)\right]^s 
     (1-r)^{-w} r^{n-k+1} \, dr
\right\}^{1/s}
\end{split}
\]
where 
\[
C_3(s,n-k,j-k,u) = \frac{\Gamma(n+1-k)\Gamma(n+2-k)}{\Gamma((n+2-k)/2)^2}
C_2(s,n-k+1,n-k+1,j-k,u). 
\]
\end{theorem}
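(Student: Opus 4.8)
The plan is to reduce the statement to Lemma~\ref{yam2} via the pointwise bound of Theorem~\ref{thm:BPn_bound}; essentially the whole proof is parameter matching. Write $G(r) = M_p(r,(\bp f)^{(n)})$ and $h(\rho) = M_p\!\left(\tfrac{d^k}{d\theta^k}(e^{-in\theta}f),\rho\right)$. Theorem~\ref{thm:BPn_bound} gives
\[
G(r) \le 2\,\frac{\Gamma(n+1-k)\Gamma(n+2-k)}{\Gamma((n+2-k)/2)^2}\, r^{-k}\int_0^1 \rho^{\,n+1-k}\,h(\rho)\,(1-r^2\rho^2)^{k-n-1}\,d\rho .
\]
The first step is to put the kernel into the exact form used by Lemma~\ref{yam2}. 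Since $k-n-1<0$ and $1-r^2\rho^2 = (1-r\rho)(1+r\rho) \ge 1-r\rho$ for $r,\rho\in[0,1)$, I would replace $(1-r^2\rho^2)^{k-n-1}$ by $(1-r\rho)^{k-n-1}$, which turns the integral into $\int_0^1 h(\rho)\,\rho^{\,n+1-k}(1-r\rho)^{-(n+1-k)}\,d\rho$. This is precisely the operator of Lemma~\ref{yam2} with exponent $m=n-k+1$ and with the power of the integration variable also equal to $n-k+1$.

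Next I would raise $G$ to the $s$-th power, integrate against the weight $r^{j}(1-r)^{-u}$, and apply Lemma~\ref{yam2} with the identification $(p,m,k,j,u)\mapsto(s,\,n-k+1,\,n-k+1,\,j-k,\,u)$. With this choice the lemma's input norm is exactly $\|h\|_{L^s(r^{\,n-k+1}(1-r)^{-w})}$ with $w=u+(m-1)s=u+(n-k)s$, which matches the right-hand side of the theorem, and the constant it produces is $C_2(s,n-k+1,n-k+1,j-k,u)$, so that together with the Gamma-quotient it assembles into $C_3$. Before invoking the lemma I would check that each of its standing hypotheses reduces to one of the theorem: $m>0$ and the power of the integration variable being $>-1$ follow from $0\le k\le n$; the requirement $j-k>-1$ is assumed directly; and the two conditions $u<1$ and $u>1-ms=1-(n+1-k)s$ are exactly the hypothesis $1-(n+1-k)s<u<1$. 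The identity $w=u+(n-k)s$ is then forced.

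The step I expect to be the main obstacle is the exponent bookkeeping created by the $r^{-k}$ prefactor coming out of Theorem~\ref{thm:BPn_bound}. Carrying $r^{-k}$ through the $s$-th power turns the output weight into $r^{\,j-ks}$, since $\int_0^1 (r^{-k}\,(Th)(r))^s\,r^{j}(1-r)^{-u}\,dr = \int_0^1 (Th)(r)^s\,r^{\,j-ks}(1-r)^{-u}\,dr$, where $Th$ denotes the kernel operator above. So the delicate point is to reconcile the power of $r$ that the pointwise bound forces on the output side with the output power $j-k$ that Lemma~\ref{yam2} controls, and to confirm the resulting integrability at $r=0$ under the stated range of $j$. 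This is also the place where the precise constant must be tracked carefully, including the factor of $2$ carried by Theorem~\ref{thm:BPn_bound}; I would rely on the fact that Lemma~\ref{yam2} permits infinite values on either side, so the inequality persists at the endpoints and no separate finiteness argument is needed.
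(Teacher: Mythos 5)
Your reduction is exactly the paper's: the paper defines $g(r)=\int_0^1 \rho^{n+1-k}\,M_p\bigl(\tfrac{d^k}{d\theta^k}(e^{-in\theta}f),\rho\bigr)(1-r\rho)^{k-n-1}\,d\rho$, deduces $M_p(r,(\bp f)^{(n)})\le C\,r^{-k}g(r)$ with $C=2\,\Gamma(n+1-k)\Gamma(n+2-k)/\Gamma((n+2-k)/2)^2$ from Theorem~\ref{thm:BPn_bound} together with $(1-r^2\rho^2)^{k-n-1}\le(1-r\rho)^{k-n-1}$, and then applies Lemma~\ref{yam2} with precisely your identification $(p,m,k,j,u)\mapsto(s,\,n-k+1,\,n-k+1,\,j-k,\,u)$; your verification of the lemma's hypotheses and of $w=u+(n-k)s$ matches the paper's (and is more explicit). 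Your attention to the factor of $2$ is also warranted: the stated $C_3$ in the theorem actually drops the $2$ carried by $C$, while the later remark $C_3(s,0,1,0)\le 2\pi/\sin(\pi/s)$ restores it.

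However, the step you flag as ``the main obstacle'' is a genuine gap in your write-up: you never reconcile $r^{j-ks}$ with $r^{j-k}$, and the needed comparison points the wrong way. For $k\ge1$, $s>1$, and $0<r<1$ one has $r^{j-ks}\ge r^{j-k}$, so the quantity your pointwise bound produces, $\int_0^1 g(r)^s r^{j-ks}(1-r)^{-u}\,dr$, \emph{dominates} the quantity $\int_0^1 g(r)^s r^{j-k}(1-r)^{-u}\,dr$ that Lemma~\ref{yam2} controls, rather than being dominated by it. You should know that the paper's own proof silently makes the same leap: it applies Lemma~\ref{yam2} to $g$ with weight $r^{j-k}$ immediately after stating $M_p(r,(\bp f)^{(n)})\le Cr^{-k}g(r)$, with no accounting for the $s$-th power of $r^{-k}$. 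The gap is real but closable (only constants change, and for $k=0$ there is nothing to do). Split at $r=1/2$: on $[1/2,1]$ one has $r^{-ks}\le 2^{k(s-1)}r^{-k}$, so the chain goes through at the cost of a factor $2^{k(s-1)}$ inside the $s$-th power. On $[0,1/2]$, use that $(\bp f)^{(n)}$ is analytic, so $M_p(r,(\bp f)^{(n)})$ is nondecreasing in $r$ and is bounded there by $M_p(1/2,\cdot)\le C2^{k}g(1/2)$; moreover $g(1/2)\le 2^{n+1-k}\int_0^1\rho^{n+1-k}h(\rho)\,d\rho$ with $h$ your input means, and H\"older bounds this by the right-hand side of the theorem, since the auxiliary integral $\int_0^1 \rho^{n+1-k}(1-\rho)^{w/(s-1)}\,d\rho$ converges exactly when $w>1-s$, which follows from the hypothesis $u>1-(n+1-k)s$; finally $\int_0^{1/2}r^{j}(1-r)^{-u}\,dr<\infty$ because $j>k-1\ge-1$. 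With that patch both your argument and the paper's are complete.
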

\begin{proof}
Define 
\[
 g(r) = \int_0^1 \rho^{n+1-k} M_p\left( \frac{d^k}{d\theta^k} 
         (e^{-in\theta}f),\rho \right)
              (1-\rho r)^{k-n-1} d\rho.
\]
Then by Theorem \ref{thm:BPn_bound} and the fact that 
$(1-\rho^2 r^2)^{k-n-1} \le (1 - \rho r)^{k-n-1}$ we have 
\[
M_p(r, (\bp f)^{(n)}) \le 
C r^{-k} g(r),
\]
where $C = 2 \tfrac{\Gamma(n+1-k)\Gamma(n+2-k)}{\Gamma((n+2-k)/2)^2}.$
But by Lemma \ref{yam2},
\[
\begin{split}
&\phantom{={}} \left( \int_0^1 |g(r)|^s \, (1-r)^{-u} \, r^{j-k} dr \right)^{1/s}  
\\ &\le C_2(s,n-k+1,n-k+1,j-k,u) \quad \times \\ 
&\qquad \qquad \left[
 \int_0^{1} \left[
   M_p \left(\frac{d^k}{d\theta^k}(e^{-in\theta}f),r \right) 
       \right]^s (1-r)^{-w} r^{n-k+1} \, dr
\right]^{1/s}.
\end{split}
\]
\end{proof}
By using Equation \eqref{eq:simple_deriv_inequality}, it is not 
hard to modify the bound in the theorem so that it only involves the 
integral means of the first $k$ derivatives of $f$ in the 
$\theta$ direction. 

Note that if we take $1 < s < \infty$ and $n=k$ and $j=1+n$ and 
$u=0$, then by the remarks after Lemma \ref{yam2} we see that 
$C_3(s,0,1,0) \le 2\pi/\sin(\pi s)$. 
If we also take $p=s$ and note that $r \, dr \, d\theta = dA$, 
we have the following corollary.
\begin{corollary}
For $1 < p < \infty$ and $n \ge 0$, if $f \in L^1(\Disc)$ and 
$f$ is in $W^{n,p}$ when restricted to almost every circle of radius less than 
$1$ centered at the origin, we have 
\[ 
\| \bp(f)^{(n)} \|_{L^p(r^n dA)} \le 
2\frac{\pi}{\sin(\pi/p)} \left\| \frac{d^n}{d\theta^n} 
(e^{-in\theta}f) \right\|_{L^p(dA)}.
\]
\end{corollary}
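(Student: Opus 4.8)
The plan is to obtain this as the special case of Theorem~\ref{yam3} flagged in the remark just above, so essentially no new work is needed beyond checking hypotheses and unwinding the weighted integral means into honest area norms. Concretely, I would apply Theorem~\ref{yam3} with the parameter choices $s = p$, $k = n$, $j = n+1$, and $u = 0$. With these choices $n-k = 0$, hence $w = u + (n-k)s = 0$, and all hypotheses are met whenever $1 < p < \infty$: indeed $1 < s < \infty$, $0 \le k \le n$, $j - k = 1 > -1$, and $1 - (n+1-k)s = 1 - p < 0 = u < 1$. The remaining standing assumptions (that $f \in L^1(\Disc)$ and that the restriction of $f$ to almost every circle lies in $W^{n,p}$) are exactly the hypotheses of the corollary, so this choice is legitimate.

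Next I would evaluate the constant. Since $n-k = 0$, the Gamma prefactor in $C_3$ collapses, $\frac{\Gamma(n+1-k)\Gamma(n+2-k)}{\Gamma((n+2-k)/2)^2} = \frac{\Gamma(1)\Gamma(2)}{\Gamma(1)^2} = 1$, so that $C_3(p,0,1,0)$ reduces to the surviving factor $C_2(p,1,1,1,0)$ together with the constant $2$ carried along from Theorem~\ref{thm:BPn_bound} (this is the factor $2$ visible in the constant $C$ in the proof of Theorem~\ref{yam3}). The remark following Lemma~\ref{yam2} already performs the relevant optimization over $a,b$ and invokes the reflection formula for $\Gamma$ to give $C_2(p,1,1,1,0) \le \pi/\sin(\pi/p)$. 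Combining these produces $C_3(p,0,1,0) \le 2\pi/\sin(\pi/p)$, which is precisely the constant asserted in the corollary. Accounting for this factor of $2$ correctly is the one small piece of bookkeeping to watch, since the displayed definition of $C_3$ in the statement of Theorem~\ref{yam3} does not itself carry it.

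Finally I would translate the two weighted integral-mean expressions into the stated $L^p$ area norms. Writing $dA = r\,dr\,d\theta$, Tonelli's theorem gives $\|h\|_{L^p(r^n\,dA)}^p = 2\pi \int_0^1 M_p(r,h)^p\,r^{n+1}\,dr$ and $\|h\|_{L^p(dA)}^p = 2\pi \int_0^1 M_p(r,h)^p\,r\,dr$. With our parameters the left-hand side of Theorem~\ref{yam3} is exactly $(2\pi)^{-1/p}\|(\bp f)^{(n)}\|_{L^p(r^n\,dA)}$ (using $u = 0$ and $j = n+1$), while the right-hand side integral is $(2\pi)^{-1/p}\|\frac{d^n}{d\theta^n}(e^{-in\theta}f)\|_{L^p(dA)}$ (using $w = 0$ and the weight $r^{\,n-k+1} = r$). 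The common factor $(2\pi)^{-1/p}$ cancels from both sides, and inserting the bound $C_3(p,0,1,0) \le 2\pi/\sin(\pi/p)$ yields the claimed inequality.

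I do not anticipate a genuine obstacle, since the analytic content is already carried by Theorem~\ref{yam3} and by the evaluation of $C_2(p,1,1,1,0)$ via the $\Gamma$-reflection formula. The only points that require care are the hypothesis check $1 - p < 0$ (which is exactly why $p > 1$ is needed and what makes the weight $u = 0$ admissible) and the correct tracking of the factor $2$ and of the normalizing constant $(2\pi)^{-1/p}$ appearing symmetrically on both sides; both are routine.
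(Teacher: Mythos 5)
Your proposal is correct and follows exactly the paper's own route: the corollary is obtained from Theorem~\ref{yam3} with $s=p$, $k=n$, $j=n+1$, $u=0$, combined with the bound $C_2(p,1,1,1,0)\le \pi/\sin(\pi/p)$ from the remark after Lemma~\ref{yam2} and the identity $r\,dr\,d\theta = dA$. Your bookkeeping of the factor $2$ (carried through the proof of Theorem~\ref{yam3} from Theorem~\ref{thm:BPn_bound}, though absent from the displayed formula for $C_3$) matches what the paper intends in asserting $C_3(p,0,1,0)\le 2\pi/\sin(\pi/p)$.
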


Here is another corollary, which follows from taking 
$p=s$, replacing $n$ with $n + k$ where $n, k \ge 0$, 
and letting  
$j=1 + k$,
and $u=0$.  
\begin{corollary}
For $1 < p < \infty$ and integers $n,k \ge 0$, if 
$f \in L^1(\Disc)$ and 
$f$ is in $W^{k,p}$ when restricted to almost every circle of radius less than 
$1$ centered at the origin we have 
\[ 
\| \bp(f)^{(n + k)} \|_{L^p(r^k dA)} \le 
C_3(p,n,1,0) \left\| \frac{d^k}{d\theta^k}(e^{-i(n+k)\theta}f) 
       \right\|_{L^p(r^n (1-r)^{-np} \, dA)}.
\]
\end{corollary}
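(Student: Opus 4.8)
The plan is to obtain this corollary as a direct specialization of Theorem~\ref{yam3}, followed by rewriting both radially weighted integrals of integral means as weighted $L^p(dA)$ norms over the disc. Concretely, in Theorem~\ref{yam3} I would set $s = p$, replace the theorem's derivative order $n$ by $n+k$ while keeping its $\theta$-derivative order equal to $k$, and choose $j = 1+k$ and $u = 0$. With these choices the first index $n-k$ of $C_3$ becomes $(n+k)-k = n$, the second index $j-k$ becomes $(1+k)-k = 1$, and the exponent $w = u + (n-k)s$ becomes $w = np$, so that the constant appearing is exactly $C_3(p,n,1,0)$, matching the statement.

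Next I would check that all hypotheses of Theorem~\ref{yam3} are met under these substitutions. The condition $0 \le k \le n+k$ holds because $n \ge 0$; the condition $j - k > -1$ reads $1 > -1$; and the two-sided condition $1 - (n+1)p < u < 1$ reads $1 - (n+1)p < 0 < 1$, which holds since $p > 1$ and $n \ge 0$ force $(n+1)p > 1$. The integrability and regularity hypotheses ($f \in L^1(\Disc)$ and $f \in W^{k,p}$ on almost every circle) are exactly those assumed in the corollary. Having verified these, Theorem~\ref{yam3} yields the inequality between radial integrals
\[
\left\{ \int_0^1 [M_p(r,(\bp f)^{(n+k)})]^p \, r^{1+k} \, dr \right\}^{1/p}
\le
C_3(p,n,1,0)
\left\{ \int_0^1 \left[ M_p\!\left( \tfrac{d^k}{d\theta^k}(e^{-i(n+k)\theta}f), r \right) \right]^p (1-r)^{-np} \, r^{n+1} \, dr \right\}^{1/p}.
\]

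The final step is the conversion to disc norms using polar coordinates. Writing $dA = r\,dr\,d\theta$ and recalling that $[M_p(r,g)]^p = \tfrac{1}{2\pi}\int_0^{2\pi} |g(re^{i\theta})|^p\,d\theta$, Tonelli's theorem gives
\[
\int_0^1 [M_p(r,g)]^p \, r^{1+k} \, dr = \frac{1}{2\pi} \int_{\Disc} |g|^p \, r^k \, dA,
\]
and likewise the right-hand radial integral equals $\tfrac{1}{2\pi}$ times $\int_{\Disc} |G|^p \, r^n (1-r)^{-np}\,dA$, where $G = \tfrac{d^k}{d\theta^k}(e^{-i(n+k)\theta}f)$. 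The one point requiring care is the weight bookkeeping: one must confirm that $r^k\,dA = r^{1+k}\,dr\,d\theta$ and $r^n(1-r)^{-np}\,dA = r^{n+1}(1-r)^{-np}\,dr\,d\theta$ line up precisely with the radial powers produced by the theorem. The common factor $(2\pi)^{-1/p}$ then cancels from both sides upon taking $p$th roots, leaving exactly the asserted bound. I do not expect any serious obstacle here: the entire argument is a substitution into Theorem~\ref{yam3} together with a routine Fubini--Tonelli passage between integral means and weighted area integrals, entirely parallel to the corollary proved immediately before.
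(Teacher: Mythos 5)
Your proposal is correct and takes exactly the same route as the paper, which derives this corollary from Theorem~\ref{yam3} precisely by taking $s=p$, replacing $n$ with $n+k$, and setting $j=1+k$ and $u=0$. Your check of the hypotheses (in particular $1-(n+1)p<0<1$ and $j-k=1>-1$) and the Tonelli conversion between radial integrals of $[M_p(r,\cdot)]^p$ and weighted $L^p(dA)$ norms, with the common $2\pi$ factor cancelling, simply makes explicit the routine bookkeeping the paper leaves implicit.
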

If $k = 0$, the right hand side above simplifies to
\[
C_3(p,n,1,0) \| f \|_{L^p(r^n (1-r)^{-np} \, dA)}.
\]
Now, if we take $b = 1/p$ and $a= -n + 1/(pq)$ in the definition of 
$C_2$, we see that 
\[
\begin{split}
&\phantom{={}} C_3(p,n,1,0) \\
& \le 
2 \frac{\Gamma(n+1)\Gamma(n+2)}{\Gamma(1 + (n/2))} \ \times \\
&\phantom{={}} C_1(aq, (n+1-b)q, n+1)^{1/q} 
C_1(ap + (n+1-b)p - (p-1), bp, 1)^{1/p} \\ &=
2 \frac{\Gamma(n+1) \Gamma(n+2) \Gamma(1/p)^{1/q} \Gamma(nq+(1/q))^{1/q} 
\Gamma(1/q)^{1/p} \Gamma(1/p)^{1/p}}
{\Gamma(1 + (n/2))\Gamma(nq+1)^{1/q} \Gamma(1)^{1/p}}
\\ &=
2 \frac{ \Gamma(n+1) \Gamma(n+2)
\Gamma(1/p)\Gamma(1/q)^{1/p} \Gamma(nq + (1/q))^{1/q}}
{\Gamma(1 + (n/2)) \Gamma(nq+1)^{1/q}}.
\end{split}
\]

\section{A Counterexample}
We now give an example of a function $f$ such that $M_2(r,f)$ is 
bounded but $\bp f$ is not in $H^2$.  
In fact, the function in our example can be chosen so that 
$f \in C^{\infty}(\mathbb{D})$ and so that 
$M_\infty (r, f) \rightarrow 0$ as $r \rightarrow 1^{-}$. 
This function is called a ``counterexample'' since it is a counterexample 
to the conjecture that $\bp(f) \in H^2$ if $M_2(r,f)$ is bounded. 
Note that for the special case where $f$ is of the form 
$|g|^{p-2}g$ where $g$ is an analytic function, then 
$\bp(f) \in H^2$ if $M_2(r,f)$ is bounded (see the introduction).  

We first derive some general formulas for the Bergman projection of a 
function. 
Suppose that $f \in L^2(\Disc)$. 
Note that for almost every $r$ in $[0,1]$, $f$ restricted to the circle 
of radius $r$ has a Fourier series
since it is in $L^2([0,2\pi))$ for almost every $r$. 
Thus we can write
\[
f(re^{i\theta}) = \sum_{n=-\infty}^\infty a_n(r) e^{in\theta},
\]
where for a.e.~$r$ convergence holds in $L^2(0, 2\pi)$. 
Here 
\[
a_n(r) = \frac{1}{2\pi} \int_0^{2\pi} f(re^{i\theta}) e^{-in\theta}\,  d\theta.
\]
Note that the functions $a_n(r)$ are measurable by Fubini's theorem. 
Also, by Fubini's theorem
\[
\int_{\Disc} |f(z)|^2 \, \frac{dA(z)}{2\pi} = 
\frac{1}{\pi}
\int_0^1 \int_{0}^{2\pi} |f(r e^{i\theta})|^2 \, d\theta \, r\, dr.
\]
But since, for almost every fixed $r$, the Fourier series in $\theta$ 
of $f(re^{i\theta})$ converges in $L^2(0, 2\pi)$, we have 
\begin{equation}\label{eq:disc_fourier_norm}
\int_{\Disc} |f(z)|^2 \, \frac{dA(z)}{2\pi} = 
\frac{1}{\pi}
\int_0^1 \sum_{n=0}^\infty |a_n(r)|^2 \, r \, dr.
\end{equation}

We now prove the following lemma relating what we have said to 
calculating the Bergman projection of $f$. 
\begin{lemma}
Suppose that $f \in L^2(\Disc)$.  Then we can write
\[
f(re^{i\theta}) = \sum_{n=-\infty}^\infty a_n(r) e^{in\theta}
\]
for a.e.~$r$, 
where for a.e.~$r$, convergence holds in $L^2(0, 2\pi)$. 
Also, the Bergman projection of $f$ is given by
\[
(\bp f)(z) = \frac{1}{\pi} \sum_{n=0}^\infty 
\left[ \int_0^1 (n+1)a_n(r) r^{n+1} dr \right]
z^n.
\]
\end{lemma}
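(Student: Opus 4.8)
The plan is to expand the Bergman kernel as a power series in $\conj{w}z$, insert it into the integral defining $\bp f$, pass to polar coordinates, and then interchange summation and integration so that the angular integral reproduces the Fourier coefficients $a_n$. The first assertion---that $f$ has the stated Fourier expansion for a.e.\ $r$, with convergence in $L^2(0,2\pi)$---requires no new work: as noted in the discussion preceding the lemma, Fubini's theorem shows that the restriction of $f$ to the circle of radius $r$ lies in $L^2(0,2\pi)$ for a.e.\ $r$, and the displayed series is simply the Fourier series of that restriction. I would therefore dispatch it in a single sentence and devote the proof to the projection formula.

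For the formula, I would start from $\bp f(z) = \frac{1}{\pi}\int_\Disc f(w)(1-\conj{w}z)^{-2}\,dA(w)$ and use
\[
\frac{1}{(1-\conj{w}z)^2} = \sum_{n=0}^\infty (n+1)\,\conj{w}^{\,n} z^n,
\]
which comes from differentiating the geometric series and converges whenever $|\conj{w}z| < 1$. Writing $w = \rho e^{i\phi}$, so that $dA(w) = \rho\,d\rho\,d\phi$ and $\conj{w}^{\,n} = \rho^n e^{-in\phi}$, turns the projection into a double integral in $\rho$ and $\phi$ of $f(\rho e^{i\phi})$ against this series. After the sum and integral are interchanged, the inner angular integral $\int_0^{2\pi} f(\rho e^{i\phi})e^{-in\phi}\,d\phi$ equals $2\pi a_n(\rho)$ by the very definition of $a_n(\rho)$, and collecting the powers of $z$ gives the asserted expansion for $\bp f$.

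The only step that needs genuine justification is the interchange of summation and integration, and this is where I expect the main (though modest) obstacle to lie. Fix $z$ with $|z| = R < 1$. Because $|w| \le 1$, the series of absolute values is dominated termwise by $\sum_{n=0}^\infty (n+1)(\rho R)^n = (1-\rho R)^{-2} \le (1-R)^{-2}$, uniformly in $w$, so that
\[
\int_\Disc |f(w)| \sum_{n=0}^\infty (n+1)\,|\conj{w}z|^n \, dA(w)
\le \frac{\|f\|_{L^1(\Disc)}}{(1-R)^2} < \infty,
\]
where the finiteness uses that $\Disc$ has finite area, whence $f \in L^2(\Disc) \subset L^1(\Disc)$. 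This absolute integrability is exactly the hypothesis needed to apply Fubini's theorem and legitimize the interchange. The remaining bookkeeping is routine: each coefficient integral $\int_0^1 a_n(\rho)\rho^{n+1}\,d\rho$ is finite because $|a_n(\rho)| \le M_2(\rho,f)$ together with the Cauchy--Schwarz inequality, and since $z$ was an arbitrary point of $\Disc$, the resulting power series represents $\bp f$ throughout the disc.
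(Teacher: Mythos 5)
Your proof is correct, but it justifies the crucial interchange quite differently from the paper, and more efficiently. The paper also begins by expanding the kernel as $\sum_{n\ge 0}(n+1)\conj{w}^n z^n$, but then proceeds in two stages: for fixed $z$ and almost every fixed $\rho$ it integrates the product of the kernel series and the Fourier series of $f_\rho$ term by term in $\phi$, invoking the uniform convergence of the kernel series on the circle together with the $L^2(0,2\pi)$ convergence of the Fourier series; it then pulls the remaining sum out of the $\rho$-integral by dominated convergence, dominating the partial sums via Cauchy--Schwarz by $\bigl(\sum_n |a_n(\rho)|^2\bigr)^{1/2}\bigl((1+r^2)/(1-r^2)^3\bigr)^{1/2}$, which is integrable against $\rho\,d\rho$ by equation \eqref{eq:disc_fourier_norm}. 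You collapse all of this into a single Fubini--Tonelli application, using the bound $\sum_n (n+1)|\conj{w}z|^n \le (1-R)^{-2}$, uniform in $w$, so that only $\|f\|_{L^1(\Disc)}<\infty$ is required; the coefficients $a_n(\rho)$ then emerge from their defining integral rather than from any convergence property of the Fourier series. This buys genuine generality: your argument establishes the projection formula for every $f\in L^1(\Disc)$ in one stroke, whereas the paper's $L^2$-based proof must be supplemented --- as the paper itself notes after the lemma, via $L^p$ convergence of Fourier series and the Hausdorff--Young inequality --- even to reach $f\in L^p$ with $p>1$.

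One bookkeeping point worth flagging: carried out carefully, your computation produces the constant $2$ rather than $\tfrac{1}{\pi}$ in front of the sum, since the angular integral contributes $2\pi a_n(\rho)$ and $\tfrac{1}{\pi}\cdot 2\pi = 2$. Testing $f\equiv 1$ confirms this: there $\bp f = 1$, while $a_0\equiv 1$ and $\int_0^1 r\,dr = \tfrac12$, so the formula as stated in the lemma would give $\tfrac{1}{2\pi}$ instead of $1$. The same factor-of-$2\pi$ slip occurs in the paper's own statement and proof (the $2\pi$ produced when the $\phi$-integral hits $e^{i(m-n)\phi}$ is dropped), so your derivation is the correct one; the discrepancy is harmless downstream, since the counterexample in the final section only needs the Taylor coefficients of $\bp f$ to fail to be square-summable.
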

\begin{proof}
Let $z=r e^{i\theta}$ and $w=\rho e^{i\phi}$. Note that 
\[
\begin{split}
(\bp f)(z) &= \int_{\Disc} \sum_{n=0}^\infty (n+1) z^n \conj{w}^n f(w) 
\, d\sigma(w)
\\ &=
\frac{1}{\pi}
\int_0^1 \int_0^{2\pi} \sum_{n=0}^\infty \sum_{m=-\infty}^\infty 
    (n+1) a_m(\rho) e^{im\phi} r^{n} \rho^{n+1} e^{in\theta}e^{-in\phi}
\, d\phi \, d\rho
\\
\end{split}
\]
by Fubini's theorem. For fixed $z$ and $\rho$, the sum 
$ \displaystyle \sum_{n=0}^\infty (n+1) z^n \conj{w}^n$
converges uniformly on $[0,2\pi]$, and thus for 
fixed $z$ and 
almost every fixed $\rho$, the sum 
$\displaystyle 
\sum_{n=0}^\infty (n+1) z^n \conj{w}^n f(w)$
converges in $L^2([0,2\pi])$. 
Also, for almost every fixed $\rho$ 
the sum  
$ \displaystyle \sum_{m=-\infty}^\infty a_m(\rho) e^{im\phi}$
converges in $L^2(0,2\pi)$.  
Thus, we can 
move the integral over $\phi$ inside the two summations to see that  
\[
(\bp f)(z) 
= \frac{1}{\pi}
\int_0^1 \sum_{n=0}^\infty (n+1) a_n(\rho) \rho^{n+1} r^n e^{in\theta} \, d\rho.
\]
Now, we wish to apply the dominated convergence theorem to move the sum 
outside the integral.  To see that we can do this, note that 
for each $\rho$ and each $N \ge 0$ we
have by the Cauchy-Schwarz inequality that 
\[
\left| \sum_{n=0}^N (n+1) a_n(\rho) \rho^{n} r^n e^{in\theta} \right|
\le
\left(\sum_{n=0}^\infty |a_n(\rho)|^2 \right)^{1/2}
\left(\sum_{n=0}^\infty (n+1)^2 r^{2n} \rho^{2n} \right)^{1/2}. 
\]
But the second sum can be bounded by $(1+r^2)/(1-r^2)^3$ 
independently of $\rho$  
and the first sum is 
integrable in with respect to the measure $\rho \, d \rho$ by 
equation \eqref{eq:disc_fourier_norm}.
Thus we may apply the dominated convergence theorem to see 
that 
\[
(\bp f)(z) 
= \frac{1}{\pi}
 \sum_{n=0}^\infty 
\left[ \int_0^1 (n+1) a_n(\rho) \rho^{n+1} \, d\rho \right]
z^n.
\]
\end{proof}
Of course, this theorem shows that the formula for the Bergman 
projection is valid if $f \in L^p$ for $p > 2$, 
since $L^p$ is then a subset of $L^2$.  The formula also holds for 
any $f \in L^p$ for $p > 1$.  This can be shown by using 
the fact that the Fourier series of an $L^p$ function converges to 
that function in $L^p$ for $1 < p < \infty$, by using 
H\"{o}lder's inequality instead of the Cauchy-Schwarz inequality, and 
by using Fubini's theorem and the Hausdorff-Young inequality to show that 
$\sum_{n=0}^\infty |a_n(\rho)|^q$ is integrable with respect to 
$\rho \, d \rho$. However, we will really only need the 
formula to hold for bounded functions, since the 
functions to which we need to apply the theorem will all be bounded. 

To construct a function $f$ such that $M_\infty(r,f) \rightarrow 0$ as 
$r \rightarrow 1^{-}$ and $\bp f \not\in H^2$, 
we will use the following lemma.  Note that the constant $1/4$ in 
the lemma is  
not sharp and could be replaced any number strictly between $0$ and $1$. 
\begin{lemma}
There is an increasing sequence 
$0=b_0, b_1, b_2, \ldots \rightarrow 1$ and an increasing sequence 
of non-negative integers $m_1, m_2, \ldots$ such that 
\[
\int_{b_{n-1}}^{b_n} (m_n + 1) r^{m_n + 1} \, dr \ge \frac{1}{4}.
\]
\end{lemma}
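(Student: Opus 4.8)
The plan is to construct both sequences simultaneously by induction, exploiting the elementary fact that for each fixed $c<1$ the quantity $\int_c^1 (m+1) r^{m+1}\,dr = \tfrac{m+1}{m+2}(1-c^{m+2})$ tends to $1$ as $m \to \infty$; in other words, the ``mass'' of the integrand $(m+1)r^{m+1}$ piles up near $r=1$ as $m$ grows. This is what will let me capture at least a quarter of that mass inside a subinterval $[b_{n-1},b_n]$ whose left endpoint is already forced to be close to $1$.

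I would start with $b_0 = 0$ and proceed inductively. Suppose $b_{n-1} \in [0,1)$ and (for $n \ge 2$) the integer $m_{n-1}$ have already been chosen. To guarantee in advance that $b_n \to 1$, I first set $c_n = \max\bigl(b_{n-1},\, 1 - \tfrac{1}{n}\bigr)$, so that $c_n < 1$ while $c_n \ge 1 - \tfrac1n$. Since $\int_{c_n}^1 (m+1) r^{m+1}\,dr \to 1$ as $m \to \infty$, I can choose an integer $m_n$ with $m_n > m_{n-1}$ (no lower constraint when $n=1$) large enough that $\int_{c_n}^1 (m_n+1) r^{m_n+1}\,dr > \tfrac14$.

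Next I would invoke the intermediate value theorem. The function $F(b) = \int_{c_n}^b (m_n+1) r^{m_n+1}\,dr$ is continuous and strictly increasing on $[c_n,1]$, with $F(c_n)=0$ and $F(1) > \tfrac14$, so there is a $b_n \in (c_n,1)$ with $F(b_n) = \tfrac14$. Because $c_n \ge b_{n-1}$, enlarging the interval only increases the integral, so $\int_{b_{n-1}}^{b_n}(m_n+1)r^{m_n+1}\,dr \ge F(b_n) = \tfrac14$, which is the required bound. Moreover $b_n > c_n \ge b_{n-1}$ gives the strict monotonicity of $(b_n)$, and $b_n \ge c_n \ge 1 - \tfrac1n$ (together with $b_n<1$) forces $b_n \to 1$, while the $m_n$ are increasing non-negative integers by construction.

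I do not anticipate a genuine obstacle here beyond the bookkeeping; the only point that really needs care is ensuring $b_n \to 1$ rather than merely $b_n$ increasing to some limit strictly below $1$, and this is exactly what the artificial floor $c_n \ge 1 - \tfrac1n$ secures. Everything else is a direct consequence of the concentration estimate for $\int_c^1(m+1)r^{m+1}\,dr$ and one application of the intermediate value theorem per step.
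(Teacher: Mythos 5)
Your proof is correct and follows essentially the same inductive scheme as the paper's: use the fact that $\int_c^1 (m+1)r^{m+1}\,dr = \tfrac{m+1}{m+2}\bigl(1-c^{m+2}\bigr) \to 1$ as $m \to \infty$ for fixed $c < 1$, then apply the intermediate value theorem to carve off mass exactly $\tfrac14$ at each step. If anything, your version is slightly more careful than the paper's: the artificial floor $c_n \ge 1 - \tfrac1n$ and the explicit demand $m_n > m_{n-1}$ make $b_n \to 1$ and the strict monotonicity of $(m_n)$ immediate, whereas the paper (which instead takes $m_{n+1}$ minimal with tail mass at least $\tfrac12$ past $b_n$) leaves both of these points implicit.
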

\begin{proof}
We prove this by induction.  Let $b_0=0, b_1=1/\sqrt{2}$, and  $m_1 = 0$.  
Then we have 
\[
\int_{b_0}^{b_1} (m_1 + 1)r^{m_1+1} \, dr = 
\int_0^{1/\sqrt{2}} r \, dr = \frac{1}{4}.
\]

Now, suppose we have found an increasing sequence of constants 
$b_0, \ldots, b_n < 1$  and an increasing sequence of non-negative integers
$m_1, \ldots m_n$ satisfying the above condition. 
Note that for each $k \ge 0$, 
\[
\int_{b_n}^1 (k+1)r^{k+1} \, dr = \frac{k+1}{k+2}(1-b_n^{k+1}).
\]
Now, as $k \rightarrow \infty$, this approaches $1$, so there is some $k$ 
such that 
\[
\int_{b_n}^1 (k+1)r^{k+1} \, dr = \frac{k+1}{k+2}(1-b_n^{k+1}) \ge \frac{1}{2}.
\]
We choose $m_{n+1}$ to be the smallest such $k$.  
Now, the above inequality 
implies that there is some constant $b$ such that 
$b_n < b < 1$ and  
\[
\int_{b_n}^{b} (k+1)r^{k+1} \, dr = \frac{1}{4}.
\]
We then choose $b_{n+1} = b$. 
\end{proof}

We now have the following theorem in which we construct bounded functions 
whose Bergman projections are not in $H^2$. 
\begin{theorem}
Let the $b_n$ be defined as in the previous lemma.  
Let $\{c_n\}_{n=1}^\infty$ be a bounded sequence such that 
$\sum_{n} |c_n|^2 = \infty$. Define
\[
a_j(r) = c_j \chi_{[b_{j-1}, b_j)}(r) 
\]
and
\[
f(re^{i\theta}) = \sum_{j=1}^\infty a_j(r) e^{im_j \theta}.
\]
Then $f$ is bounded but $\bp f$ is not in $H^2$. 
\end{theorem}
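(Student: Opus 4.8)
The plan is to compute the Taylor coefficients of $\bp f$ explicitly via the projection formula of the preceding lemma and then read off failure of $H^2$ membership from Parseval's identity.

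First I would check boundedness. The intervals $[b_{j-1}, b_j)$ are disjoint and exhaust $[0,1)$, so for each fixed $r$ there is a unique index $j$ with $r \in [b_{j-1}, b_j)$, and for that $r$ every term of the series defining $f$ vanishes except the $j$-th. Hence $f(re^{i\theta}) = c_j e^{im_j\theta}$ on the corresponding annulus, so $M_\infty(r,f) = |c_j| \le \sup_n |c_n| < \infty$; in particular $f$ is bounded and lies in $L^2(\Disc)$, which is exactly what is needed to apply the preceding lemma. (If one also arranges $c_j \to 0$, the same computation gives $M_\infty(r,f) \to 0$ as $r \to 1^-$.)

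Next I would identify the Fourier coefficients and substitute into the projection formula. Since the frequencies $m_j$ are distinct, the computation above shows that the $n$-th Fourier coefficient of $f(re^{i\cdot})$, in the notation of the preceding lemma, equals $a_j(r) = c_j \chi_{[b_{j-1}, b_j)}(r)$ when $n = m_j$ for some $j$, and equals $0$ for all remaining $n$. Feeding this into $(\bp f)(z) = \frac{1}{\pi}\sum_{n=0}^\infty \bigl[\int_0^1 (n+1) a_n(r)\, r^{n+1}\,dr\bigr]\, z^n$, every index $n$ that is not one of the $m_j$ contributes nothing, while the coefficient of $z^{m_j}$ is $\frac{c_j}{\pi} I_j$, where $I_j = \int_{b_{j-1}}^{b_j} (m_j+1) r^{m_j+1}\,dr$. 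Thus
\[
(\bp f)(z) = \frac{1}{\pi}\sum_{j=1}^\infty c_j I_j\, z^{m_j}.
\]

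Finally I would invoke the key estimate from the previous lemma, namely $I_j \ge 1/4$ for every $j$. An analytic function $\sum_k \beta_k z^k$ lies in $H^2$ if and only if $\sum_k |\beta_k|^2 < \infty$, and the nonzero Taylor coefficients of $\bp f$ sit at the distinct exponents $m_j$, so
\[
\|\bp f\|_{H^2}^2 = \frac{1}{\pi^2}\sum_{j=1}^\infty |c_j|^2 I_j^2 \ge \frac{1}{16\pi^2}\sum_{j=1}^\infty |c_j|^2 = \infty,
\]
and therefore $\bp f \notin H^2$. The step needing the most care is the distinctness of the frequencies $m_j$: it is what ensures that distinct terms of $f$ land in distinct powers of $z$ (ruling out any cancellation among them) and what lets me apply Parseval termwise. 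Once this is in hand, together with the verification that boundedness of $f$ places it in the hypothesis of the projection-formula lemma, the remaining inequalities are immediate from $I_j \ge 1/4$ and $\sum_j |c_j|^2 = \infty$.
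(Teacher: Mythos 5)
Your proof is correct and takes essentially the same approach as the paper: identify the Fourier coefficients $a_n(r)$, plug them into the series formula for $\bp f$ from the projection lemma, and use the lower bound $\int_{b_{j-1}}^{b_j}(m_j+1)r^{m_j+1}\,dr \ge \tfrac{1}{4}$ to conclude that the Taylor coefficients of $\bp f$ at the exponents $m_j$ fail to be square-summable. Your explicit attention to the distinctness of the $m_j$ and to the fact that boundedness places $f$ in $L^2(\Disc)$ (so the lemma applies) only makes precise what the paper leaves implicit.
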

\begin{proof}
  For each $r$ there is exactly one $j$ such that 
$r \in [b_{j-1}, b_j)$, which implies that
$M_\infty(r,f) = c_j$, where $j$ is the number such that 
$r \in [b_{j-1}, b_j)$. Note that this implies that 
$f$ is bounded. Thus we have that 
\[
(\bp f)(z) = 
\frac{1}{\pi}
\sum_{n=1}^\infty c_n \left[ \int_{b_{n-1}}^{b_n} (m_n + 1) r^{m_n + 1} \, dr
\right] z^{m_n}.
\]
But this means that the $m_n{}^{\text{th}}$ term in the Taylor 
series of $\bp f$ is at least $c_n/4$ in absolute value, so that the 
Taylor coefficients of $\bp f$ are not square summable. 
\end{proof}

In the theorem, if we choose the sequence $\{c_n\}$ so that 
it approaches $0$ (but is not square summable), then the function 
$f$ defined in the statement of the theorem will approach $0$ uniformly 
as $z$ approaches the boundary of the disc, but its Bergman projection 
will not be in $H^2$.  Thus, we have the following corollary.
\begin{corollary}
There is a bounded function $f(z)$ in $\Disc$ that approaches $0$ uniformly 
as $|z| \rightarrow 1$, such that $\bp f \not \in H^2$. 
\end{corollary}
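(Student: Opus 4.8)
The plan is to apply the preceding Theorem, which has already done all the heavy lifting: for any bounded sequence $\{c_n\}$ with $\sum_n |c_n|^2 = \infty$, the explicitly constructed function $f(re^{i\theta}) = \sum_{j=1}^\infty c_j \chi_{[b_{j-1},b_j)}(r) e^{im_j\theta}$ is bounded while $\bp f \notin H^2$. The only remaining freedom is the choice of $\{c_n\}$, and I would exploit it to force uniform decay at the boundary. So the corollary reduces to picking the sequence cleverly and then reading off the boundary behavior.

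First I would recall the key computation inside the proof of that Theorem: because the intervals $[b_{j-1}, b_j)$ partition $[0,1)$, for each $r$ there is exactly one index $j$ with $r \in [b_{j-1}, b_j)$, and on that interval one has $M_\infty(r,f) = |c_j|$. Since $M_\infty(r,f) = \esssup_\theta |f(re^{i\theta})|$ controls $|f|$ uniformly in $\theta$, uniform decay of $f$ as $|z| \to 1^-$ is therefore \emph{equivalent} to $c_n \to 0$. This identification is the one conceptual point to state cleanly.

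Then I would simply select a sequence that simultaneously tends to $0$ and fails to be square summable, for instance $c_n = 1/\sqrt{n}$, so that $\sum_n |c_n|^2 = \sum_n 1/n = \infty$ while $c_n \to 0$. With this choice the Theorem immediately yields a bounded $f$ with $\bp f \notin H^2$, and the observation above gives the required decay: as $r \to 1^-$, since $b_n \uparrow 1$ the index $j = j(r)$ determined by $r \in [b_{j-1}, b_j)$ tends to infinity, whence $M_\infty(r,f) = c_{j(r)} \to 0$. Because $M_\infty$ is the supremum over $\theta$, this is exactly uniform convergence of $f(re^{i\theta})$ to $0$ as $|z| \to 1$.

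There is essentially no obstacle here; the corollary is a packaging statement built entirely on the Theorem. The single point that must be handled with care is the identity $M_\infty(r,f) = |c_j|$ on $[b_{j-1}, b_j)$ together with the fact that $M_\infty$ is a genuine $\esssup$ over $\theta$, so that the statement ``$M_\infty(r,f) \to 0$'' really encodes uniform boundary decay rather than decay in some averaged sense. Everything else, namely boundedness of $f$ and the failure $\bp f \notin H^2$, is quoted directly from the Theorem.
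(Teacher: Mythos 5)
Your proposal is correct and is essentially identical to the paper's own argument: the paper likewise obtains the corollary by choosing $\{c_n\}$ in the preceding theorem to tend to $0$ while failing to be square summable, with the uniform decay read off from $M_\infty(r,f) = |c_{j(r)}|$ and $j(r) \to \infty$ as $r \to 1^-$. Your explicit choice $c_n = 1/\sqrt{n}$ and your careful note that $M_\infty$ is an essential supremum in $\theta$ merely make explicit what the paper leaves implicit.
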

We note in passing that if we define 
$a_j(r) = c_j \phi_j(r)$, where $\phi_j$ is a $C^\infty$ 
bump function with support in 
$(b_{j-1}, b_j)$ that is equal to $1$ on a sufficiently large part of 
$(b_{j-1}, b_j)$
then we can even construct $f$ so that it is in $C^\infty$ and 
approaches $0$ uniformly as $|z| \rightarrow 1$.

\nocite{Olver:2010:NHMF}
\bibliography{extremal}

\end{document}